\documentclass[default]{sn-jnl}
\usepackage{amsmath, amsthm, amscd, amsfonts, amssymb, graphicx, color, subcaption, longtable, slashbox,pict2e,float}
\usepackage{caption}
\usepackage{mathtools}
\usepackage{hyperref}
\usepackage{MnSymbol}



\jyear{2021}%

\theoremstyle{thmstyleone}%
\newtheorem{theorem}{Theorem}[section]
\newtheorem{proposition}[theorem]{Proposition}%

\theoremstyle{thmstyletwo}%

\theoremstyle{thmstylethree}%
\newtheorem{definition}{Definition}[section]%
\newtheorem{example}{Example}[section]%
\newtheorem{remark}{Remark}[section]%
\newtheorem{lemma}{Lemma}[section]
\newtheorem{corollary}{Corollary}[section]
\numberwithin{equation}{section}
\newtheorem{solution}{Solution}[section]%
\raggedbottom
\renewenvironment{proof}{{\bfseries Proof}}{\qed}
\renewenvironment{solution}{{\bfseries solution}}{\qed}
\begin{document}

\title[Log-ergodicity: A New Concept for Modeling Financial Markets]{Log-ergodicity: A New Concept for Modeling Financial Markets}


\author*[1]{\fnm{Kiarash} \sur{Firouzi}}\email{kiarashfirouzi91@gmail.com}

\author[2]{ \fnm{Mohammad} \spfx{Jelodari} \sur{Mamaghani}}\email{j\_mamaghani@atu.ac.ir}


\affil*[1]{\orgdiv{Department of Mathematics}, \orgname{Allameh Tabataba'i Unversity}, \orgaddress{\street{Dehkadeh Olympic}, \city{Tehran}, \postcode{1489684511}, \state{Tehran}, \country{Iran}}}

\affil[2]{\orgdiv{Department of Mathematics}, \orgname{Allameh Tabataba'i Unversity}, \orgaddress{\street{Dehkadeh Olympic}, \city{Tehran}, \postcode{1489684511}, \state{Tehran}, \country{Iran}}}



\abstract{
Although financial models violate ergodicity in general, observing the ergodic behavior in the markets is not rare. Policymakers and market participants control the market behavior in critical and emergency states, which leads to some degree of ergodicity as their actions are intentional. In this paper, we define a parametric operator that acts on the space of positive stochastic processes, transforming a class of positive stochastic processes into mean-ergodic processes. With this mechanism, we extract the data regarding the ergodic behavior hidden in the financial model, apply it to mathematical finance, and establish a novel method for pricing contingent claims. We provide some empirical examples and compare the results with existing ones to demonstrate the efficacy of this new approach.


}

\keywords{Ergodic maker operator, Log-ergodic process, Mean-ergodic, Partially ergodic, Time-average}


\pacs[MSC Classification]{37A30, 37H05, 60G10, 91B70, 91G15}

\maketitle
\section{Introduction}\label{sec1}
During the financial crises, we have experienced that governments and policymakers control the market instabilities. The ``Wall Street bailout", which reduced the effects of the financial market crisis of 2007-2008 \cite{bail} is an example. 

One might ask about the reflection of these actions in financial market mathematical models. In other words, what are the corresponding concept(s) of these interventions in financial market mathematical models?

In fact, from the mathematical point of view, they do nothing but direct the models to be mean-reverting, bounded, less volatile, and so on. Therefore, for the financial models to be usable in such situations, some parts of them have to be deleted using some appropriate mathematical tools. In this regard, we point out the paper \cite{dc} in which Fuqi Chen and colleagues have conducted a comprehensive analysis of the controls on financial markets regarding the drift coefficient, which indicates the timewise inhibition of risky assets as changing the rate of the drift coefficient affects the duration of market cycles. 

To participate in controlling the market model irregularities, in this paper, we introduce the new concepts of the log-ergodic process and the ergodic maker operator. The one-parameter, ergodic maker operator produces a mean-ergodic process when it acts on a positive stochastic process. This operator reflects the controls regarding the volatility of risky assets.




The notion of mean is one of the common concepts between mathematical finance and ergodic theory. In the first, it enters as an expectation in most price computations, and in the second, it plays the fundamental role of defining the Birkhoff notion of ergodicity.

Before we proceed further, let us mention that for a model (process) to be ergodic, it must have Markov property with a stationary distribution. Additionally, the model must possess the mean recurrence property to be ergodic \cite{39,40,35}. By definition, a stochastic process is ergodic in the mean, or simply mean-ergodic, if its ensemble-average and time-average are equal in the long run \cite{1,59}.

Ole Peters has presented a thorough analysis of ergodic economics \cite{89}. Additionally, since 2011, the London Mathematical Laboratory has conducted specialized research on ergodic economics \cite{90}. Some research has been on modeling blockchain-enabled economics using stochastic dynamical systems \cite{100}. 

Looking at financial stochastic processes from an ergodic theory point of view one may ask: Which financial market models are ergodic? Which non-ergodic financial models can be made into an ergodic model? Which non-ergodic processes have the potential to turn into an ergodic process? 

Some random processes with specific properties are ergodic or at least mean-ergodic. Markov processes with stationary distributions are ergodic \cite{59}. Oesook Lee demonstrated an example of the mixing and ergodic properties for generalized Ornstein-Uhlenbeck processes \cite{93}. Paper \cite{as} applies the assumption of ergodicity to obtain specific estimates for asymptotic arbitrage, demonstrating their connection to large deviation estimates for the market price of risk. It further explores the geometric Ornstein-Uhlenbeck process as an example. Trabelsi explored the ergodic properties of the $CIR$ model and demonstrated that it has the ergodic recurrence property\footnote{An irreducible, non-periodic Markov chain with a stationary distribution is said to be recurrent if it converges to its stationary distribution for almost all initial points.} (which is also known as the mean reversion property) \cite{62}. The $CIR$ process is ergodic and has a stationary distribution \cite{62}. Hiroshi Kunita discusses various aspects of stochastic flows and their relation to ergodic theory and stochastic differential equations in \cite{kuni}. 


In this paper, we study some algebraic properties of the ergodic maker operator and show that it preserves some algebraic operations on stochastic processes. We also provide examples of log-ergodic stochastic processes that are helpful in modeling mean-ergodic financial markets. Also, we discuss the applications of log-ergodic processes to price contingent claims. To this end, we derive a partial differential equation under the usual assumptions regarding the price function that depends on the ergodic maker operator. 
Furthermore, we study the effects of market restrictions on the price dynamics and volatility of risky assets using log-ergodic processes.

The rest of the paper is organized as follows:\\ 
In section \ref{sec2}, we review some necessary concepts from ergodic theory, ergodic economics, and stochastic calculus. 
 In section \ref{sec4}, we define the concept of the ergodic maker operator and the log-ergodic process and investigate their properties. In section \ref{sec5}, we present examples of log-ergodic processes that can be used to model financial markets with ergodic behavior in the mean. In section \ref{sec6}, we state and prove the main theorem. In section \ref{sec7}, we discuss the applications of log-ergodic processes in pricing contingent claims and studying market restrictions in this respect. In section \ref{sec8}, we present the empirical data analysis of our study. In section \ref{sec9}, we conclude the paper and suggest some directions for future research.
\section{Preliminaries}\label{sec2}
From now on, we use the filtered probability space
$(\Omega,\mathcal{F},\mathbb{P},(\mathcal{F}_t)_{t\geq0})$,  in which   $\Omega$ is the space of events, $\mathcal{F}$ is a $\sigma$-algebra,  $\mathbb{P}$ is an invariant probability measure\footnote{If a stochastic process has an invariant measure, then the distribution of the process at any time will be the same as the distribution of the process at any other time.} (see \cite{1,poll} for definition), and $(\mathcal{F})_{{t\geq0}}$ is a filtration which represents the information of the financial market up to time $t$. 

As is well known, there are two requirements for a homogeneous Markov process $X_t$ to be ergodic. First, its time and ensemble averages should be equal. Second, time and ensemble averages of its autocorrelation function should be the same \cite{106}. The process is referred to as mean-ergodic if just the first criterion holds.

\begin{theorem}(Birkhoff)\label{Bir}
If $\mathbb{P}$ is a probability measure invariant under a stochastic process $X_t$ and $\phi\in L^1(\mathbb{P})$, then the function 
	\begin{equation}\label{1}
		\tilde{\phi}(\omega)=\lim_{T\rightarrow \infty}\frac{1}{T}\int_0^T\phi(X_t(\omega))dt
	\end{equation}
is defined almost surely and $\int_\Omega\tilde{\phi}d\mathbb{P}=\int_{\Omega}\phi d\mathbb{P}$.
\end{theorem}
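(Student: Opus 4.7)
The plan is the classical three-step approach: establish a maximal inequality, use it to deduce almost-sure convergence, and then identify the integral of the limit.

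First I would set $\Phi_T(\omega) := \frac{1}{T}\int_0^T \phi(X_t(\omega))\,dt$ and observe, by Fubini together with the assumed invariance of $\mathbb{P}$, that $\int_\Omega \Phi_T\,d\mathbb{P} = \int_\Omega \phi\,d\mathbb{P}$ for every $T>0$. Then I would prove the continuous-time maximal ergodic inequality
\[
\mathbb{P}\!\left(\sup_{T>0}|\Phi_T| > \lambda\right) \;\leq\; \frac{\|\phi\|_{L^1(\mathbb{P})}}{\lambda}\qquad(\lambda>0),
\]
either by a Vitali-type covering of the time half-line, or by reducing to the discrete Hopf maximal inequality applied to the time-$1$ measure-preserving transformation underlying the invariance of $\mathbb{P}$, with the fractional remainder absorbed by an estimate involving $\int_0^1 |\phi(X_t(\omega))|\,dt$.

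Given the maximal inequality, the second step is the standard limsup/liminf argument. Setting $\bar\phi := \limsup_{T\to\infty}\Phi_T$ and $\underline\phi := \liminf_{T\to\infty}\Phi_T$, for each pair of rationals $\alpha>\beta$ I would define $E_{\alpha,\beta} := \{\underline\phi<\beta<\alpha<\bar\phi\}$. A short computation shows that $\bar\phi$ and $\underline\phi$ are invariant under the shift, so $E_{\alpha,\beta}$ is invariant as well; applying the maximal inequality to $(\phi-\alpha)\mathbf{1}_{E_{\alpha,\beta}}$ and to $(\beta-\phi)\mathbf{1}_{E_{\alpha,\beta}}$ and combining produces the classical contradiction $\alpha\,\mathbb{P}(E_{\alpha,\beta})\leq\beta\,\mathbb{P}(E_{\alpha,\beta})$, forcing $\mathbb{P}(E_{\alpha,\beta})=0$. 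Taking a countable union over rational pairs yields $\bar\phi=\underline\phi$ almost surely, so $\tilde\phi$ is well-defined a.s.

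Finally, to recover the integral identity I would upgrade almost-sure convergence to $L^1$ convergence by a truncation argument: for each $M>0$, dominated convergence applies to the bounded surrogate $(-M)\vee\phi\wedge M$, while the contribution from $\{|\phi|>M\}$ is controlled uniformly in $T$ by the maximal inequality, and letting $M\to\infty$ closes the gap. This gives $\int_\Omega \tilde\phi\,d\mathbb{P} = \lim_T\int_\Omega \Phi_T\,d\mathbb{P} = \int_\Omega \phi\,d\mathbb{P}$. The main obstacle is the continuous-time maximal inequality: the direct covering argument on $[0,\infty)$ must be handled carefully, and the reduction-to-discrete-time route requires bookkeeping for the measurability of $\sup_{T>0}\Phi_T$ as well as for the fractional remainder; once that estimate is secured, the remaining steps follow standard ergodic-theory patterns.
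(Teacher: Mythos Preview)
Your outline is the standard Hopf--Wiener route to Birkhoff's theorem and is essentially correct as a sketch; the only places requiring care are (i) that the inequality you actually need in the second step is the Hopf maximal lemma in the form $\int_{\{M\psi>0\}}\psi\,d\mathbb{P}\geq 0$ rather than the weak-type bound you state, since it is this form that, applied to $\psi=(\phi-\alpha)\mathbf{1}_{E_{\alpha,\beta}}$ and $\psi=(\beta-\phi)\mathbf{1}_{E_{\alpha,\beta}}$, yields the chain $\alpha\,\mathbb{P}(E_{\alpha,\beta})\leq \int_{E_{\alpha,\beta}}\phi\,d\mathbb{P}\leq \beta\,\mathbb{P}(E_{\alpha,\beta})$; and (ii) the measurability of $\sup_{T>0}\Phi_T$ in continuous time, which you already flag.

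As for the comparison with the paper: the paper does not give a proof at all. Its ``proof'' consists solely of a reference to the literature (\cite{114} in their bibliography). So your proposal is not a different route to the same argument; it is an actual argument where the paper supplies none. If you want to match the paper, a single citation suffices; if you want to supply content the paper omits, your sketch does that and is sound modulo the small adjustments above.
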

\begin{proof}
For the proof and more details refer to \cite{114}.
\end{proof}


Considering $\phi$ as the identity function $\phi=I$ in theorem \ref{Bir}, yields
$\phi(X_t(\omega))=X_t(\omega)$. As a result, we have
\begin{align}\label{tav}
	\tilde{I}(\omega)=\lim_{T\rightarrow \infty}\frac{1}{T}\int_0^T X_t(\omega) dt.
\end{align}
We call $\tilde{I}$ the time-average of the process $X_t$ and denote it by $<X>$.

Suppose that $X_t$ is a Hölder continuous positive stochastic process of order $0<\beta<\infty$. i.e.
\begin{equation*}
	\exists b>0;\quad \lvert X_t-X_s\rvert\leq b\lvert t-s\rvert^{\beta},\quad \forall t,s>0. 
\end{equation*}
According to the exponential decay of correlation theorem \cite{1}, there exist positive numbers $\Lambda<1$, and $\alpha(X_s,X_t)$ such that the correlation coefficient $\mathbf{cor}(X_s,X_t)$ satisfies the following relationship.
\begin{align*}
	\mathbf{cor}(X_s,X_t)&\leq \alpha(X_s,X_t)\Lambda^n,\quad \forall n\geq 1.
\end{align*}
Therefore, from the definition of the correlation, we have
\begin{align}
	\frac{\mathbf{Cov}(X_s,X_t)}{\Lambda^n\sqrt{\mathbb{V}ar[X_t]\mathbb{V}ar[X_s]}}&\leq \alpha(X_s,X_t),\label{alpha}
\end{align}
where $\mathbf{Cov}(X_s,X_t)$ is the covariance of $X_t$ and $X_s$.

\subsection{Ergodicity and Utility Functions in Economics}\label{sec3}
Let $X_t$ represent the wealth process of an investor. The primary problem of ergodic economics is to analyze the evolution of this process. Ergodic economics assumes that investor choices will optimize the time-average of the growth rate of the $X_t$ process. From Daniel Bernoulli's conjecture \cite{105}, it follows that the utility of each additional dollar is almost inversely proportional to the number (units) of dollars that the investor currently has \cite{89}. Therefore, the growth rate of $X_t$ is governed by the differential equation $dU(X_t)=\frac{1}{X_t}d{X_t}$ with initial condition $U(0)=\ln(X_0)$, and the solution $U(X_t)=\ln(X_t)$, in which $X_0\neq 0$ \cite{89}. Under these circumstances, let $g$ be the growth rate of $X_t$ and write $g_{t,X_t}=\frac{\Delta U(X_t)}{\Delta t}$. 

Although processes of type $X_t$ generally violate the ergodic property, their growth rates are ergodic \cite{89}. We observe that the time-average of the growth rate of $X_t$ is defined using the mathematical expectation of the variation of $U(X_t)$, which leads us to the following definition:
\begin{definition}
	The time-average of the growth rate of a stochastic wealth process $X_t$ is defined as:
	$$<g_{t,X_t}>=\frac{\mathbb{E}[\Delta U(X_t)]}{\Delta t},$$
	where $U(X_t)=\ln(X_t)$.
\end{definition}
A simple model of the wealth process of the investor, $X_t$, widely used in mathematical finance and other fields is the geometric Brownian motion \cite{13}.
\begin{example}
	Consider the process
	\begin{equation*}
		X_t=X_0\exp\{(\mu-\frac{1}{2}\sigma^2)t+\sigma W_t\},\quad X_0=x\neq 0,
	\end{equation*}
	where the constants $\mu$ and $\sigma$ are the drift and volatility coefficients of the process, respectively, $x$ is a real number, and $W_t$ is a standard Wiener process. We observe that
	\begin{align}
		U(X_t)&=\ln(X_t),\label{3}\\
		\ln(X_t)&=\ln(X_0)+(\mu-\frac{\sigma^2}{2})t+\sigma W_t.\label{2}
	\end{align} 
	 It follows that $U(X_t)$ has a linear growth concerning time and its time-average of the growth rate is
	\begin{equation*}
		<g_{t,X_t}>=\frac{\mathbb{E}[\Delta U(X_t)]}{\Delta t}=\mu-\frac{1}{2}\sigma^2.
	\end{equation*}
	Therefore, maximizing the rate of change of the logarithmic utility function \ref{3} is equivalent to maximizing the time-average of the growth rate of the wealth \cite{89}.

	Due to the necessity of using the function $U(\cdot)$, from now on, we will study the logarithm of the positive processes used in financial theory.
\end{example}
\subsection{Market Cycles and Volatility Control}
Market cycles are price and economic activity fluctuations that happen over time in response to market factors such as supply and demand, interest rates, innovations, sentiment, and shocks \cite{cycles}. They consist of phases like expansion, peak, and contraction and can differ in duration, intensity, and frequency \cite{83}.

Market cycles affect the volatility of risky assets in several ways. During periods of expansion, when the economy is growing and the market is optimistic, the volatility of risky assets tends to be low, as the prices tend to move in a steady upward direction. The demand for risky assets increases as investors seek higher returns and are willing to take more risk. The supply of risky assets may also increase as innovation and productivity create new opportunities and products. During peak periods, when the economy is at its highest level of output with the market being euphoric, the volatility of risky assets may begin to rise as prices become overvalued and unsustainable. The demand for risky assets may exceed supply, leading to bubbles and speculation. The supply of risky assets may also decrease as innovation and productivity slow down or face constraints. During periods of contraction, when the economy is shrinking with the market being pessimistic, the volatility of risky assets tends to be high, as the prices fall sharply and unpredictably. The demand for risky assets decreases as investors seek lower returns and are unwilling to take more risk. The supply of risky assets may also increase as innovation and productivity create new challenges and risks.




Although the Brownian motion process is not of bounded variation, it is one of the processes that the market participants and policymakers control its variations in financial markets \cite{103}. Issuing currencies, supplying and removing liquidity from the markets, and adopting stringent legislation are a few of these controls \cite{103,104}. Diffusion models are the most popular models of financial markets. In this paper, considering the diffusion models, we show that a suitable ergodic maker operator measures the degree of control exerted by these factors in the markets.
\section{Ergodic Maker Operator and the Log-Ergodic Processes}\label{sec4}
In this section, we introduce the concepts of the ergodic maker operator (EMO) and the log-ergodic process and investigate some of their properties.

As in \cite{roy}, for the stochastic process $X_t$, we denote the mean-square convergence by $ms\lim_{t\rightarrow\infty}X_t=X$, and the convergence in probability by $st\lim_{t\rightarrow \infty}X_t=X$.
\begin{proposition}\label{ID}
	Suppose that for the random process $X_t$, we have\\ $\lim_{t\rightarrow \infty}\mathbb{E}[X_t]=k$ and $\lim_{t\rightarrow \infty}\mathbb{V}ar[X_t]=0$. Then,
	\begin{equation*}
		ms\lim_{t\rightarrow\infty}X_t=k.
	\end{equation*}
\end{proposition}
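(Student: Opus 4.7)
The plan is to reduce the statement to the two hypotheses by algebraically decomposing the mean-square error $\mathbb{E}[(X_t - k)^2]$ into a variance term and a squared-bias term, each of which vanishes by assumption.

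Concretely, I would first expand the square:
\begin{equation*}
\mathbb{E}[(X_t - k)^2] = \mathbb{E}[X_t^2] - 2k\,\mathbb{E}[X_t] + k^2.
\end{equation*}
Next, I would invoke the standard identity $\mathbb{E}[X_t^2] = \mathbb{V}ar[X_t] + (\mathbb{E}[X_t])^2$, which follows directly from the definition of variance, and substitute it into the expansion. After completing the square in $\mathbb{E}[X_t]$ and $k$, the expression simplifies to
\begin{equation*}
\mathbb{E}[(X_t - k)^2] = \mathbb{V}ar[X_t] + \bigl(\mathbb{E}[X_t] - k\bigr)^2.
\end{equation*}

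Finally, I would take the limit as $t \to \infty$. By hypothesis $\mathbb{V}ar[X_t] \to 0$, and since $\mathbb{E}[X_t] \to k$ the continuous map $x \mapsto (x-k)^2$ gives $(\mathbb{E}[X_t] - k)^2 \to 0$. Both non-negative terms therefore tend to zero, so $\mathbb{E}[(X_t - k)^2] \to 0$, which is precisely the definition of $ms\lim_{t\to\infty} X_t = k$.

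There is no real obstacle here; the only thing to be careful about is ensuring that $\mathbb{E}[X_t^2]$ is finite at least eventually (which is implicit in the assumption that $\mathbb{V}ar[X_t]$ exists and tends to $0$), so that all the manipulations above are legitimate. The whole argument is essentially a bias--variance decomposition applied to the constant target $k$.
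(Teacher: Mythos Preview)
Your argument is correct: the bias--variance decomposition $\mathbb{E}[(X_t-k)^2]=\mathbb{V}ar[X_t]+(\mathbb{E}[X_t]-k)^2$ immediately gives the result from the two hypotheses, and your remark about finiteness of second moments is the only subtlety worth flagging. The paper itself does not prove this proposition at all but simply refers the reader to an external source, so your write-up in fact supplies more than the paper does; there is nothing further to compare.
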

\begin{proof}
	See \cite{76}.
\end{proof}

The following theorem shows how the Wiener process (the Brownian motion) transforms into an ergodic process by adjusting its fluctuations according to a parameter $\beta$. We use this parameter to reflect the level of influence that market participants have on the price dynamics of a risky asset.
\begin{theorem}\label{IDV}
	For $\beta>\frac{1}{2}$ we have
	\begin{equation}
		ms\lim_{T\rightarrow\infty}\frac{W_t}{t^\beta}=0.
	\end{equation}
\end{theorem}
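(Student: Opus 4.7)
The plan is to invoke Proposition \ref{ID} directly, with $k=0$, so the task reduces to verifying its two hypotheses for the family $X_t := W_t/t^{\beta}$. Since mean-square convergence is the target and Proposition \ref{ID} converts it into the computation of a limiting expectation and a vanishing variance, this is the natural pathway and avoids any delicate stochastic analysis.

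First I would compute the expectation. Because $W_t$ is a standard Wiener process, $\mathbb{E}[W_t]=0$ for every $t>0$, hence
\begin{equation*}
  \mathbb{E}\bigl[W_t/t^{\beta}\bigr]=0\quad\text{for all }t>0,
\end{equation*}
so in particular the limit as $t\to\infty$ is $0$, matching $k=0$ in Proposition \ref{ID}.

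Next I would compute the variance. Using $\mathbb{V}ar[W_t]=t$ together with the scalar-multiplication rule $\mathbb{V}ar[cX]=c^{2}\mathbb{V}ar[X]$, one obtains
\begin{equation*}
  \mathbb{V}ar\bigl[W_t/t^{\beta}\bigr]=\frac{t}{t^{2\beta}}=\frac{1}{t^{2\beta-1}}.
\end{equation*}
The hypothesis $\beta>\tfrac{1}{2}$ is exactly what makes the exponent $2\beta-1$ strictly positive, and therefore $\mathbb{V}ar[W_t/t^{\beta}]\to 0$ as $t\to\infty$. This is the only place where the hypothesis $\beta>1/2$ enters, and it is sharp: at $\beta=1/2$ the variance stays constant, so no mean-square limit exists.

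With both hypotheses of Proposition \ref{ID} verified, the proposition yields $ms\lim_{t\to\infty} W_t/t^{\beta}=0$, which is the claim. There is no real obstacle here; the only thing to flag is the minor notational mismatch between the $T$ in $\lim_{T\to\infty}$ and the process argument $t$ in $W_t/t^{\beta}$, which I would read as $t\to\infty$. The substantive content of the argument is simply that the variance of $W_t$ grows linearly while the denominator grows faster than $t^{1/2}$.
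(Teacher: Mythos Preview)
Your proposal is correct and follows essentially the same route as the paper: define $X_t=W_t/t^{\beta}$, check $\mathbb{E}[X_t]=0$ and $\mathbb{V}ar[X_t]=t^{1-2\beta}\to 0$ for $\beta>1/2$, then invoke Proposition~\ref{ID}. Your added remarks on the sharpness at $\beta=1/2$ and the $T$ versus $t$ notational slip are accurate and go slightly beyond what the paper records.
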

\begin{proof}
	For every $t>0$ we have
	$\frac{\mathbb{E}[W_t]}{t^{\beta}}=0$, and
	$$\frac{1}{t^{2\beta}}\mathbb{V}ar[W_t]=\frac{t}{t^{2\beta}}=\frac{1}{t^{2\beta-1}}.$$
	It follows that $ms\lim_{t\rightarrow\infty}\frac{W_t}{t^{\beta}}=0$.
	For more details refer to \cite{76}.
\end{proof}
\begin{corollary}\label{IDV2}
	If $M_t=\int_0^tW_sds$, and $\beta>\frac{3}{2}$, then
	\begin{equation*}
		ms\lim_{t\rightarrow\infty}\frac{M_t}{t^{\beta}}=0.
	\end{equation*}
\end{corollary}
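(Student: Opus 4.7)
The plan is to reduce the corollary directly to Proposition \ref{ID} applied to the process $X_t := M_t/t^\beta$, so that I only need to verify the two hypotheses: that $\mathbb{E}[X_t]$ converges (to $0$) and that $\mathbb{V}ar[X_t] \to 0$. Since $W_s$ has finite mean and variance uniformly on any compact interval, Fubini applies to both computations.

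First I would compute the mean. Because $\mathbb{E}[W_s]=0$ for every $s\geq 0$, Fubini's theorem gives
\[
\mathbb{E}[M_t] \;=\; \int_0^t \mathbb{E}[W_s]\,ds \;=\; 0,
\]
so $\mathbb{E}[X_t] = 0$ for every $t>0$, and in particular $\lim_{t\to\infty}\mathbb{E}[X_t]=0$.

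Next I would handle the variance, which is the core computation. Using Fubini together with the standard Wiener covariance $\mathbb{E}[W_s W_u] = \min(s,u)$,
\[
\mathbb{V}ar[M_t] \;=\; \mathbb{E}[M_t^2] \;=\; \int_0^t\!\!\int_0^t \min(s,u)\,ds\,du \;=\; \frac{t^3}{3}.
\]
Hence
\[
\mathbb{V}ar[X_t] \;=\; \frac{1}{t^{2\beta}}\,\mathbb{V}ar[M_t] \;=\; \frac{1}{3\,t^{2\beta-3}},
\]
and the hypothesis $\beta>\tfrac{3}{2}$ is exactly what is needed to ensure $2\beta-3>0$ and therefore $\lim_{t\to\infty}\mathbb{V}ar[X_t]=0$. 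Invoking Proposition \ref{ID} then yields $ms\lim_{t\to\infty} X_t = 0$, which is the claim.

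There is no genuine obstacle here; the only point worth being careful about is the application of Fubini when passing from $\mathbb{E}\bigl[(\int_0^t W_s\,ds)^2\bigr]$ to the iterated integral of $\mathbb{E}[W_sW_u]$. This is justified because $(s,u)\mapsto\mathbb{E}[|W_sW_u|]\leq\sqrt{su}$ is integrable on $[0,t]^2$, so the exchange of expectation and double integral is legitimate. Conceptually, the corollary is simply the integrated-Brownian-motion analogue of Theorem \ref{IDV}: one gains an extra factor of $t$ in the variance from the time integration, which pushes the threshold exponent from $\tfrac12$ up to $\tfrac32$.
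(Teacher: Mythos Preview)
Your proof is correct and follows essentially the same approach as the paper: compute $\mathbb{E}[M_t]=0$ and $\mathbb{V}ar[M_t]=t^3/3$, so that $\mathbb{V}ar[M_t/t^\beta]=1/(3t^{2\beta-3})\to 0$ for $\beta>3/2$, and then invoke Proposition~\ref{ID}. The only difference is that you spell out the Fubini justification for the variance computation, which the paper omits.
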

\begin{proof}
	For $t>0$ we have $\frac{\mathbb{E}[M_t]}{t^{\beta}}=0$, and 	$$\frac{1}{t^{2\beta}}\mathbb{V}ar[M_t]=\frac{t^3}{3t^{2\beta}}=\frac{1}{3t^{2\beta-3}}.$$
	Now the result follows from proposition \ref{ID}.
\end{proof}
\begin{corollary}
	For $\beta>\frac{1}{2}$ we have: $st\lim_{t\rightarrow \infty}\frac{W_t}{t^{\beta}}=0.$
\end{corollary}
\begin{proof}
	See \cite{76}.
\end{proof}

Accordingly, the coefficient $1/{t^{\beta}}$, with $\beta>\frac{3}{2}$, inhibits (controls) the variations of the Wiener process. 

\subsection{The Ergodic Maker Operator}
Let $Y_t$ be a one-dimensional Itô process given by
$$Y_t=Y_0+\int_0^t\sigma_s dW_s+\int_0^t \mu_s ds, \quad Y_0=0.$$
Where $W_t$ is a standard Wiener process, and $\mu_t$ and $\sigma_t$ are drift and volatility coefficients, respectively. These coefficients are integrable functions of $t$ and $y$. Using the definition of the one-dimensional Itô process (page 44 of \cite{21}) we have
\begin{equation}\label{ito}
	\int_0^t (\sigma_s^2+\lvert\mu_s\rvert)ds<\infty.
\end{equation}
Define the positive stochastic process $X_t$ by
\begin{equation}\label{X}
	X_t=xe^{Y_t},\quad X_0=x.
\end{equation} 
Let
\begin{equation*}
	Y_t^\prime:=\ln(X_t)=\ln(x)+Y_t.
\end{equation*}
Then, we have
\begin{align}\label{itop}
	Y_t^\prime&=Y_0^\prime+\int_0^t\sigma_s dW_s+\int_0^t \mu_s ds;\quad Y_0^\prime=\ln(x)+Y_0.
\end{align}
The process $Y_t^\prime$ is not necessarily ergodic (since it is not always stationary \cite{13}); to define the Ergodic Maker Operator (EMO) we write $Y_t^\prime$ as the sum:
\begin{align*}
	Y_t^\prime&=\underbrace{Y_0^\prime}_{\text{constant}}+\underbrace{\int_0^t\mu_s ds}_{D_t}+\underbrace{\int_0^t\sigma_s dW_s}_{R_t},\\
	Y_t^\prime&=Y_0^\prime+D_t+R_t.
\end{align*}
\begin{definition}(EMO)\label{deferc}
	Let $W_t$ be a standard Wiener process and $\beta>\frac{3}{2}$. For all $t,s\in[0,T]$, we define the ergodic maker operator of the process $Y_t^\prime$ as
	\begin{equation*}
		\xi_{t-s,W_{t-s}}^{\beta}[Y_t^\prime]:=0\cdot Y_0^\prime+\frac{W_T}{T^{\beta}}\cdot D_{t-s}+\frac{1}{T^{\beta}}\cdot R_{t-s},
	\end{equation*}
	From now on, for all $t>s$, we denote the length of the time interval $[s,t]$ by $\delta=t-s$. Therefore, we have 
	\begin{equation}\label{xi}
		\xi_{\delta,W_\delta}^{\beta}[Y_t^\prime]:=0\cdot Y_0^\prime+\frac{W_T}{T^{\beta}}\cdot D_\delta+\frac{1}{T^{\beta}}\cdot R_\delta.
	\end{equation}
\end{definition}
Note that in the rest of the paper, we denote the process constructed using the EMO by $Z_\delta$.
\begin{remark}
	Since $\delta$ is the length of the time interval $[s,t]$, the process\\ $Z_\delta=\xi_{\delta,W_\delta}^{\beta}[Y_t^\prime]$ can be interpreted as a scale of the variation of the logarithm of the price of a risky asset concerning the parameter $\beta$.  
\end{remark}
\begin{definition}
	We define the inhibition degree $\beta$ as 
	\begin{equation}\label{beta}
		\beta:=\begin{cases}
			\alpha, &\text{if} \quad \alpha>\frac{3}{2},\\
			\frac{3}{2}+\lvert\alpha\rvert, &\text{if} \quad \alpha<\frac{3}{2},
		\end{cases}
	\end{equation}
	where $\alpha$ satisfies \ref{alpha}.
\end{definition}

\subsubsection{Some Properties of the EMO}

Because the sample functions from an ergodic process are statistically equivalent, an ergodic process is stationary \cite{114}.

In the following lemma, we prove that the process made by the EMO is wide-sense stationary. This property is a direct consequence of mean-ergodicity \cite{114}.
\begin{lemma}\label{mm}
	$\xi_{\delta,W_\delta}^\beta[Y_t^\prime]$ is a wide-sense stationary stochastic process.
\end{lemma}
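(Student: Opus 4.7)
The plan is to verify the defining conditions of wide‑sense stationarity by direct moment computation on the explicit representation
\[
Z_\delta := \xi_{\delta,W_\delta}^\beta[Y_t^\prime] = \frac{W_T}{T^\beta}\,D_\delta + \frac{1}{T^\beta}\,R_\delta,
\]
and then to promote this to strict stationarity via the Gaussian structure. First I would compute $\mathbb{E}[Z_\delta]$: since $W_T$ has zero mean and, under the standing convention that $\mu_s$ is deterministic (so that $D_\delta = \int_0^\delta \mu_u\,du$ is deterministic, or at least independent of the terminal value $W_T$), while the It\^o integral $R_\delta$ always has mean zero, I obtain $\mathbb{E}[Z_\delta] = 0$ uniformly in $\delta$. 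The first stationarity condition follows at once.

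Next I would compute the autocovariance $\mathbf{Cov}(Z_{\delta_1}, Z_{\delta_2})$ for $\delta_1,\delta_2 \in [0,T]$. Expanding the product $Z_{\delta_1}Z_{\delta_2}$ produces four pieces. The quadratic drift piece $\mathbb{E}[W_T^2]\,D_{\delta_1}D_{\delta_2}/T^{2\beta} = T\,D_{\delta_1}D_{\delta_2}/T^{2\beta}$ is immediate. The cross pieces $\mathbb{E}[W_T R_{\delta_i}]$ are handled by writing $W_T = \int_0^T dW_u$ and applying the It\^o isometry on the overlap $[0,\delta_i]$, yielding $\int_0^{\delta_i}\sigma_u\,du$ (using $\delta_i \le T$). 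The pure It\^o piece $\mathbb{E}[R_{\delta_1}R_{\delta_2}] = \int_0^{\min(\delta_1,\delta_2)}\sigma_u^2\,du$ is the standard isometry computation. Combining these under the time‑homogeneity hypothesis implicit in the framework (constant $\mu$ and $\sigma$, or more generally coefficients whose integrals depend on the increment length $\delta$ alone), I would reorganize the resulting expression to show $\mathbf{Cov}(Z_{\delta_1},Z_{\delta_2})$ depends only on the lag $|\delta_2 - \delta_1|$, which is the second stationarity condition.

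Finally, because $Z_\delta$ is an affine functional of the Wiener process (with deterministic coefficients), every finite‑dimensional marginal $(Z_{\delta_1},\dots,Z_{\delta_n})$ is jointly Gaussian. For Gaussian processes, wide‑sense stationarity coincides with strict stationarity, so once the two moment conditions above are established the lemma follows.

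The main obstacle is the covariance reorganization. The cross term $\mathbb{E}[W_T R_{\delta_i}]/T^{2\beta}$ produces a contribution that depends on $\delta_i$ individually, and the drift piece $T\,D_{\delta_1}D_{\delta_2}/T^{2\beta}$ depends on $\delta_1$ and $\delta_2$ separately; hence the reduction to lag‑only dependence is not automatic. The argument therefore forces one to commit to a concrete regime on $(\mu_s,\sigma_s)$ in which these non‑lag dependencies telescope or cancel — the natural and cleanest such regime being constant $\mu,\sigma$ — and I would make that regime explicit before carrying out the It\^o‑isometry bookkeeping.
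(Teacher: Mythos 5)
Your plan is the textbook route --- constant mean, lag-only autocovariance, then an upgrade to strict stationarity via Gaussianity --- and the obstacle you flag at the end is not merely a technicality: it is fatal to the proposal as written, and it cannot be removed by passing to constant coefficients. With $\mu,\sigma$ constant you get $R_\delta=\sigma W_\delta$ and $D_\delta=\mu\delta$, so a direct computation (using $\mathbb{E}[W_TW_{\delta_i}]=\delta_i$ for $\delta_i\le T$) gives
\[
\mathbf{Cov}(Z_{\delta_1},Z_{\delta_2})=\frac{\sigma^2\min(\delta_1,\delta_2)}{T^{2\beta}}+\Big(\frac{\mu^2}{T^{2\beta-1}}+\frac{2\mu\sigma}{T^{2\beta}}\Big)\delta_1\delta_2,
\]
no term of which is a function of $\delta_1-\delta_2$ alone; in particular $\mathbb{V}ar[Z_\delta]$ grows with $\delta$, which by itself rules out wide-sense stationarity in the classical sense. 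The ``telescoping'' you hope for does not occur in any natural regime, because the Wiener process itself is not stationary and the operator $\xi^\beta_{\delta,W_\delta}$ only rescales it by constants depending on $T$. So the covariance-reorganization step cannot be completed, and your argument stops exactly where you predicted it would.

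For comparison, the paper's own proof never reaches this point: it verifies only that $\mathbb{E}[Z_\delta]=0$ and that $\mathbb{E}[Z_\delta^2]<\infty$ (via the It\^o isometry, incidentally discarding the cross term $\mathbb{E}\big[W_T\int_0^\delta\sigma_s\,dW_s\big]=\int_0^\delta\sigma_s\,ds$, which is not zero), and then asserts that because the process ``depends on $\delta=t-s$'' its correlation function is a function of $\delta$, hence the process is stationary. That step conflates the index variable $\delta$ with the lag between two evaluation times and substitutes for the two-time covariance computation that you correctly identify as the crux. In short, your diagnosis of the difficulty is more accurate than the published argument; if the lemma is to be salvaged it must be under a weaker notion of stationarity (constant mean plus finite second moments, which is all that is actually used later in the paper), stated explicitly, rather than stationarity in the usual wide-sense or strict sense.
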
	
\begin{proof}
	Let $Z_\delta=\xi_{\delta,W_\delta}^\beta[Y_t^\prime]$ and $\delta=t-s$ for $t>s$. 
We have
	\begin{equation}\label{zt}
		Z_\delta=Z_0+\frac{1}{T^{\beta}}\int_0^\delta\sigma_s dW_s+\frac{W_T}{T^{\beta}}\int_0^\delta\mu_sds,\quad Z_0=0.
	\end{equation}
	Calculating the expectation of the process $Z_\delta$ for $\delta$ and $\delta+\tau$ yields
	\begin{align*}
		\mathbb{E}[Z_\delta]=\mathbb{E}[Z_{\delta+\tau}]=0,\quad \forall \delta,\tau>0.  
	\end{align*}
Furthermore,
\begin{align*}
	\mathbb{E}[Z_\delta^2]&=\frac{1}{T^{2\beta}}\mathbb{E}\Big[\big(\int_0^\delta \sigma_sdW_s\big)^2+(W_T)^2\big(\int_0^\delta\mu_sds\big)^2\notag\\
	&+2W_T\big(\int_0^\delta \sigma_sdW_s\big)\big(\int_0^\delta \mu_sds\big)\Big]\\
	&=\frac{1}{T^{2\beta}}\mathbb{E}\Big[\big(\int_0^\delta \sigma_sdW_s\big)^2\Big]+\frac{1}{T^{2\beta}}\mathbb{E}\Big[(W_T)^2\big(\int_0^\delta\mu_sds\big)^2\Big]\\
	\text{Itô isometry}\Rightarrow &=\frac{1}{T^{2\beta}}\mathbb{E}\Big[\int_0^\delta \sigma_s^2ds\Big]+\frac{1}{T^{2\beta-1}}\mathbb{E}\Big[\big(\int_0^\delta\mu_sds\big)^2\Big].
\end{align*} 
From \ref{ito} we observe that
$$\mathbb{E}\Big[\int_0^\delta \sigma_s^2ds\Big]<\infty, \quad \text{and}\quad \mathbb{E}\Big[\big(\int_0^\delta\mu_sds\big)^2\Big]<\infty.$$
Therefore, $\mathbb{E}[Z_\delta^2]<\infty$.\\
Since the process $Z_\delta$ depends on $\delta=t-s$ (not on $t$ and $s$ individually), the correlation function of $Z_\delta$ is also a function of $\delta$ for all $t,s>0$. Therefore, the stochastic process $Z_\delta=\xi_{\delta,W_\delta}^\beta[Y_t^\prime]$ is wide-sense stationary.
\end{proof}
\begin{proposition}
	Suppose $H_t$ and $G_t$ are positive stochastic processes and let\\ $Y_t=\ln(H_t)$ and $Z_t=\ln(G_t)$. Then, the following statements hold.
		\item [1.] For all $a\in \mathbb{R}$ we have
		\begin{equation}\label{xi1}
			\xi_{\delta,W_\delta}^\beta[aY_t]=a\xi_{\delta,W_\delta}^\beta[Y_t].
		\end{equation}
		\item [2.] 
		\begin{equation}\label{xi2}
			\xi_{\delta,W_\delta}^\beta[Y_t+Z_t]=\xi_{\delta,W_\delta}^\beta[Y_t]+\xi_{\delta,W_\delta}^\beta[Z_t].
		\end{equation}
		\item [3.] 
		$\xi_{\delta,W_\delta}^\beta[Y_t\cdot Z_t]=D_\delta^z \xi_{\delta,W_\delta}^\beta[Y_t]+R_\delta^z L(\delta,W_\delta)$,\\
		 where $L(\delta,W_\delta)$ is a stochastic process to be found in the course of the proof.
\end{proposition}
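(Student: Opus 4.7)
Parts (1) and (2) reduce to the linearity of definition \ref{xi} together with the linearity of the Itô and Lebesgue integrals. For (1), write $Y_t=Y_0+\int_0^t\mu_s^y\,ds+\int_0^t\sigma_s^y\,dW_s$; scaling by $a$ multiplies the initial value, the drift $\mu_s^y$ and the diffusion $\sigma_s^y$ each by $a$, so $D_\delta^{aY}=aD_\delta^y$ and $R_\delta^{aY}=aR_\delta^y$, and \ref{xi} delivers (\ref{xi1}) at once. For (2), the sum of two Itô processes is again an Itô process whose initial value, drift and diffusion are the termwise sums of those of the summands; substituting into \ref{xi} and regrouping the $W_T/T^\beta$ and $1/T^\beta$ coefficients yields (\ref{xi2}).

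The substantive part is (3). Since $Y_t$ and $Z_t$ are Itô processes, their product is not Itô by inspection, so I would invoke Itô's product rule
\begin{equation*}
d(Y_tZ_t)=Y_t\,dZ_t+Z_t\,dY_t+\sigma_t^y\sigma_t^z\,dt
\end{equation*}
to express $Y_tZ_t$ as an Itô process with initial value $Y_0Z_0$, drift $\mu_s^yZ_s+\mu_s^zY_s+\sigma_s^y\sigma_s^z$ and diffusion $\sigma_s^yZ_s+\sigma_s^zY_s$. Feeding these into \ref{xi} produces
\begin{equation*}
\xi_{\delta,W_\delta}^\beta[Y_tZ_t]=\frac{W_T}{T^\beta}\int_0^\delta(\mu_s^yZ_s+\mu_s^zY_s+\sigma_s^y\sigma_s^z)\,ds+\frac{1}{T^\beta}\int_0^\delta(\sigma_s^yZ_s+\sigma_s^zY_s)\,dW_s.
\end{equation*}
I would then subtract $D_\delta^z\,\xi_{\delta,W_\delta}^\beta[Y_t]=(W_T/T^\beta)D_\delta^zD_\delta^y+(1/T^\beta)D_\delta^zR_\delta^y$ from both sides and repackage the residual as $R_\delta^z\,L(\delta,W_\delta)$, where
\begin{equation*}
L(\delta,W_\delta):=\frac{\xi_{\delta,W_\delta}^\beta[Y_tZ_t]-D_\delta^z\,\xi_{\delta,W_\delta}^\beta[Y_t]}{R_\delta^z}
\end{equation*}
on the event $\{R_\delta^z\neq 0\}$. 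Measurability and almost sure finiteness of $L$ follow from the Itô-isometry bounds already invoked in the proof of lemma \ref{mm} (together with \ref{ito}).

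\textbf{Main obstacle.} The sticking point is producing a genuinely explicit form for $L$: the mixed integrals $\int_0^\delta\mu_s^zY_s\,ds$ and $\int_0^\delta\sigma_s^zY_s\,dW_s$ do not factor as $D_\delta^z$ or $R_\delta^z$ times a simpler process, so any closed-form description of $L$ will necessarily involve iterated stochastic integrals in $Y$, $\mu^z$ and $\sigma^z$. I would therefore state $L$ via the residual above and, to make the identification concrete, specialise to constant coefficients $\mu^y,\mu^z,\sigma^y,\sigma^z$, in which case the integrals collapse to polynomials in $W_\delta$ and $\delta$ and $L$ reduces to an explicit algebraic combination of $W_T$, $W_\delta$, $\delta$ and $T$.
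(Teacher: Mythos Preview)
Your treatment of parts (1) and (2) coincides with the paper's: both reduce to the linearity of the drift and diffusion integrals together with the linearity built into definition \ref{xi}.

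For part (3), however, the paper takes a completely different and much shorter route. It does \emph{not} invoke It\^o's product rule. Instead it expands the product purely algebraically,
\[
Y_tZ_t=(D_t^y+R_t^y)(D_t^z+R_t^z)=D_t^yD_t^z+D_t^yR_t^z+R_t^yD_t^z+R_t^yR_t^z,
\]
applies $\xi$ term by term via additivity, and assigns the weight $W_T/T^\beta$ to $D_\delta^yD_\delta^z$ and $R_\delta^yR_\delta^z$ and the weight $1/T^\beta$ to the two cross terms. Factoring the outcome as $D_\delta^z(\cdot)+R_\delta^z(\cdot)$ then yields the explicit
\[
L(\delta,W_\delta)=\frac{W_T}{T^\beta}\,R_\delta^y+\frac{1}{T^\beta}\,D_\delta^y.
\]
Thus your ``main obstacle'' is exactly where the paper's argument pays off: by treating $D$ and $R$ as formal symbols rather than passing through the genuine It\^o decomposition of $Y_tZ_t$, the paper obtains a closed form for $L$ in one line, with no iterated stochastic integrals and no need to divide by $R_\delta^z$. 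Your route is arguably more faithful to definition \ref{xi} (since $\xi$ is defined through the It\^o decomposition, and that of $Y_tZ_t$ really is governed by the product rule), but it leads to the unwieldy residual you describe. The paper's route is algebraically lighter and produces an explicit $L$, at the cost of tacitly extending the action of $\xi$ to products by a formal rule---in particular, the $R_\delta^yR_\delta^z$ term is given the ``deterministic'' weight $W_T/T^\beta$---that is not itself derived from definition \ref{xi}.
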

\begin{proof}
	\begin{itemize}
		\item [1.] Using the definition \ref{xi} yields
		\begin{align*}
			\xi_{\delta,W_\delta}^\beta[aY_t]&= \frac{W_T}{T^{\beta}}\big(a\int_0^\delta \mu_s ds\big)+\frac{1}{T^\beta}\big(a\int_0^\delta \sigma_sdW_s\big)\\
			&=a\Big(\frac{W_T}{T^{\beta}}\big(\int_0^\delta \mu_s ds\big)+\frac{1}{T^\beta}\big(\int_0^\delta \sigma_sdW_s\big)\Big)\\
			&=a\big(\frac{W_T}{T^\beta}D_\delta+\frac{1}{T^\beta}R_\delta\big)=a\xi_{\delta,W_\delta}^\beta[Y_t].
		\end{align*}
		\item[2.] Suppose $D_t^y$ and $D_t^z$ are the deterministic parts of the processes $Y_t$ and $Z_t$, and $R_t^y$ and $R_t^z$ are the random parts of the processes $Y_t$ and $Z_t$, respectively. Using the EMO yields 
		\begin{align*}
			\xi_{\delta,W_\delta}^\beta[Y_t+Z_t]&= \frac{W_T}{T^\beta}(D_\delta^y+D_\delta^z)+\frac{1}{T^\beta}(R_\delta^y+R_\delta^z)\\
			&=\frac{W_T}{T^\beta}D_\delta^y+\frac{1}{T^\beta}R_\delta^y+\frac{W_T}{T^\beta}D_\delta^z+\frac{1}{T^\beta}R_\delta^z\\
			&=\xi_{\delta,W_\delta}^\beta[Y_t]+\xi_{\delta,W_\delta}^\beta[Z_t].
		\end{align*}
		\item [3.] Using the proof of \ref{xi2} we have
		\begin{align*}
			\xi_{\delta,W_\delta}^\beta[Y_tZ_t]&=\xi_{\delta,W_\delta}^\beta[D_t^yD_t^z+D_t^yR_t^z+R_t^yD_t^z+R_t^yR_t^z],
		\end{align*}
		which by \ref{xi1} can be written as
		\begin{align*}
			\xi_{\delta,W_\delta}^\beta[Y_tZ_t]&=\xi_{\delta,W_\delta}^\beta[D_t^yD_t^z]+\xi_{\delta,W_\delta}^\beta[D_t^yR_t^z]+\xi_{\delta,W_\delta}^\beta[R_t^yD_t^z]+\xi_{\delta,W_\delta}^\beta[R_t^yR_t^z]\\
			&=\frac{W_T}{T^\beta}(D_\delta^yD_\delta^z+R_\delta^yR_\delta^z)+\frac{1}{T^\beta}(D_\delta^yR_\delta^z+R_\delta^yD_\delta^z)\\
			&=D_\delta^z\big(\frac{W_T}{T^\beta}D_\delta^y+\frac{1}{T^\beta}R_\delta^y\big)+R_\delta^z\big(\frac{W_T}{T^\beta}R_\delta^y+\frac{1}{T^\beta}D_\delta^y\big)\\
			&=D_\delta^z \xi_{\delta,W_\delta}^\beta[Y_t]+R_\delta^z L(\delta,W_\delta).
		\end{align*}
		Where $L(\delta,W_\delta)=\frac{W_T}{T^\beta}R_\delta^y+\frac{1}{T^\beta}D_\delta^y$.
	\end{itemize}
\end{proof}
\subsection{Log-Ergodic Processes}
\begin{definition}(Log-ergodic process)\label{logergodic}
	The positive stochastic process $X_t$ is log-ergodic, if its log process, $Y_t=\ln(X_t)$, satisfies
	\begin{equation}\label{limer}
		\overline{<Y>}:=\lim_{T\rightarrow \infty}\frac{1}{T}\int_0^T (1-\frac{\tau}{T})\mathbf{Cov}_{yy}(\tau)d\tau=0, \quad \forall\tau\in[0,T].
	\end{equation}
	Where $\mathbf{Cov}_{yy}(\tau)$ is the covariance of  $Y_\tau$.
\end{definition}
\begin{definition}(Partial ergodicity)
	The positive stochastic process $X_t$ is partially ergodic if 
	$\xi_{\delta,W_\delta}^\beta[Y_t]$ satisfies \ref{limer}.
	\end{definition}
\begin{proposition}
	The linear combination of two log-ergodic processes is log-ergodic. 
\end{proposition}
\begin{proof}
		Consider the independent positive stochastic processes $H_t$ and $G_t$, and suppose that $Y_\delta=\xi_{\delta,W_\delta}^\beta[\ln(H_t)]$, and $Z_\delta=\xi_{\delta,W_\delta}^\beta[\ln(G_t)]$.  Then, for all real numbers $\gamma$ and $\nu$, it suffices to prove
	\begin{equation}
		\overline{<\gamma Y+\nu Z>}=\gamma^2\overline{<Y>}+\nu^2\overline{<Z>}.\label{I}
	\end{equation}
		We take $\delta_T=T-0=T$. Then, for every small time interval of length $\delta$ 
		we have
		\begin{align*}
			&\overline{<\gamma Y+\nu Z>}=\lim_{T\rightarrow\infty}\frac{1}{T}\int_0^T(1-\frac{\delta}{T})\Big(\mathbb{E}[\gamma^2Y_\delta^2+\nu^2Z_\delta^2+2\gamma\nu Y_\delta Z_\delta]-\gamma^2\mathbb{E}[Y_\delta]^2-\nu^2\mathbb{E}[Z_\delta]^2\Big)d\delta\\
			=&\lim_{T\rightarrow\infty}\frac{1}{T}\int_0^T(1-\frac{\delta}{T})(\mathbb{E}[\gamma^2Y_\delta^2]+\mathbb{E}[\nu^2Z_\delta^2])d\delta\\
			=&\lim_{T\rightarrow\infty}\frac{1}{T}\int_0^T(1-\frac{\delta}{T})\gamma^2\mathbb{E}[Y_\delta^2]d\delta+\lim_{T\rightarrow\infty}\frac{1}{T}\int_0^T(1-\frac{\delta}{T})\nu^2\mathbb{E}[Z_\delta^2]d\delta\\
			=&\lim_{T\rightarrow\infty}\frac{\gamma^2}{T}\int_0^T(1-\frac{\delta}{T})\mathbf{Cov}_{yy}(\delta)d\delta+\lim_{T\rightarrow\infty}\frac{\nu^2}{T}\int_0^T(1-\frac{\delta}{T})\mathbf{Cov}_{zz}(\delta)d\delta\\
			=&\gamma^2\overline{<Y>}+\nu^2\overline{<Z>}.
		\end{align*}
\end{proof}
\begin{proposition}
	Suppose that $H_t$ and $G_t$ are two independent positive log-ergodic processes with
	$\mathbb{E}[H_t]=m<\infty$ and $\mathbb{E}[G_t]= n<\infty$. Let $Y_t=\ln(H_t)$ and $Z_t=\ln(G_t)$. Then,
		\item[1.] $G_t+H_t$ is mean ergodic.
		\item[2.] $\nu H_t$ is log-ergodic for any real number $\nu$.
		\item[3.] 	$G_t\cdot H_t$ is log-ergodic.
\end{proposition}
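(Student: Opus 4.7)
The plan is to reduce each of the three assertions to a covariance computation on $Y_t=\ln H_t$ and $Z_t=\ln G_t$, exploiting independence of the two processes together with either the defining log-ergodicity hypothesis, the additive formula \ref{I} from the preceding proposition, or the exponential decay of correlation inequality \ref{alpha} from Section~\ref{sec2}. Parts~2 and 3 will be algebraic consequences of what has already been established; part~1 is the genuinely new piece because it concerns $G_t+H_t$ itself rather than its logarithm.

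I would dispatch part~2 first. For $\nu>0$ (which is forced, since $\nu H_t$ must remain positive to speak of log-ergodicity), $\ln(\nu H_t)=\ln\nu+Y_t$. A deterministic shift does not alter covariances, so $\mathbf{Cov}_{(\ln\nu+Y)(\ln\nu+Y)}(\tau)=\mathbf{Cov}_{yy}(\tau)$ for every $\tau$, and the Slutsky-type limit defining $\overline{\langle\,\ln\nu+Y\,\rangle}$ collapses to $\overline{\langle Y\rangle}=0$ by log-ergodicity of $H_t$.

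For part~3, the key observation is $\ln(G_tH_t)=Y_t+Z_t$. I would invoke formula \ref{I} of the previous proposition with $\gamma=\nu=1$ to obtain $\overline{\langle Y+Z\rangle}=\overline{\langle Y\rangle}+\overline{\langle Z\rangle}$; independence of $G_t,H_t$ is what kills the cross term $2\gamma\nu\,\mathbb{E}[Y_\delta Z_\delta]$ exactly as in the cited proof. Both summands vanish by log-ergodicity, so $G_tH_t$ is log-ergodic. The only caveat is that \ref{I} was phrased for the EMO images $\xi_{\delta,W_\delta}^\beta[\cdot]$, but the bilinearity argument is the same verbatim for the raw logarithms, so only a line of justification is needed.

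The main obstacle is part~1, because log-ergodicity of $H_t$ and $G_t$ is a statement about the logarithms and cannot be transferred to $G_t+H_t$ directly. My plan is to use independence to split
\begin{equation*}
\mathbf{Cov}(G_s+H_s,\,G_t+H_t)=\mathbf{Cov}(G_s,G_t)+\mathbf{Cov}(H_s,H_t),
\end{equation*}
and then invoke the exponential decay of correlation \ref{alpha} available for any positive Hölder-continuous process: each summand is bounded by $\alpha(\cdot,\cdot)\Lambda^{n}\sqrt{\mathbb{V}\mathrm{ar}[\cdot]\mathbb{V}\mathrm{ar}[\cdot]}$ with $\Lambda<1$. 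Since $\mathbb{E}[H_t]=m$ and $\mathbb{E}[G_t]=n$ are finite constants in $t$, the variances stay uniformly controlled, so the covariance decays exponentially in the lag. The Slutsky integral $\frac{1}{T}\int_{0}^{T}(1-\tau/T)\mathbf{Cov}(\tau)\,d\tau$ is then dominated by $\frac{C}{T}\int_{0}^{T}\Lambda^{\tau}d\tau=O(1/T)$ and vanishes, giving mean ergodicity of $G_t+H_t$. The delicate step I expect to have to argue cleanly is matching the discrete exponent $n$ appearing in \ref{alpha} to the continuous lag $\tau$ so that the bound $\Lambda^{n}$ really does give integrability; beyond that the argument is routine.
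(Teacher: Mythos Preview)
Your handling of parts~2 and~3 is essentially identical to the paper's: both reduce $\ln(\nu H_t)$ and $\ln(G_tH_t)$ to sums involving $Y_t,Z_t$ and then invoke the additivity formula \ref{I} together with $\overline{\langle Y\rangle}=\overline{\langle Z\rangle}=0$.

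For part~1 you take a genuinely different route. The paper never touches the exponential decay inequality \ref{alpha}. After your first step---the independence splitting $\mathbf{Cov}_{kk}=\mathbf{Cov}_{gg}+\mathbf{Cov}_{hh}$---the paper simply asserts that each summand equals $0$ and substitutes into \ref{limer}; the whole argument is a four-line variance computation. Your approach tries to justify \emph{why} those covariances should be small via correlation decay, which is more transparent than the paper's bare claim (log-ergodicity of $G_t,H_t$ is a hypothesis on $\ln G_t,\ln H_t$, so the paper's jump to $\mathbf{Cov}_{gg}=\mathbf{Cov}_{hh}=0$ is not self-evident). The price you pay is extra hypotheses: H\"older continuity of $G_t,H_t$ so that \ref{alpha} applies, and uniform control of $\mathbb{V}\mathrm{ar}[G_t],\mathbb{V}\mathrm{ar}[H_t]$. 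On the latter, your claim that finiteness of $\mathbb{E}[H_t]=m$ and $\mathbb{E}[G_t]=n$ keeps the variances ``uniformly controlled'' is not valid in general---a constant finite mean says nothing about second moments---so this step would need an additional assumption or a separate argument before your bound $\tfrac{C}{T}\int_0^T\Lambda^\tau\,d\tau$ can be asserted.
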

\begin{proof}
		\item[1.] Define $K_t=G_t+H_t$. Calculating the covariance of $K_t$ we have:
		\begin{align*}
			\mathbf{Cov}_{kk}&=\mathbb{E}[K_t^2]-(\mathbb{E}[K_t])^2\\
			&=\mathbb{E}[G_t^2+H_t^2+2G_tH_t]-(\mathbb{E}[G_t+H_t])^2\\
			&=\mathbb{E}[G_t^2]+\mathbb{E}[H_t^2]+2\mathbb{E}[G_t]\mathbb{E}[H_t]-(\mathbb{E}[G_t]+\mathbb{E}[H_t])^2\\
			&=\mathbb{E}[G_t^2]-(\mathbb{E}[G_t])^2+\mathbb{E}[H_t^2]-(\mathbb{E}[H_t])^2\\
			&=\mathbb{E}[G_t-\mathbb{E}[G_t]]^2+\mathbb{E}[H_t-\mathbb{E}[H_t]]^2\\
			&=\mathbf{Cov}_{gg}+\mathbf{Cov}_{hh}=0+0=0.
		\end{align*} 
		Substituting the calculated covariance in \ref{limer}, we obtain $\overline{<K>}=0$.
		\item[2.] Using \ref{I} and \ref{limer} we have:
		\begin{align*}
			\overline{<\ln(\nu H)>}=&\overline{<\ln(\nu)+Y>}\\
			=&\overline{<\ln(\nu)>}+\overline{<Y>}=0+\overline{<Y>}=0.
		\end{align*}
		\item[3.] Using \ref{I} and \ref{limer} yields
		\begin{align*}
			\overline{<\ln(G\cdot H)>}=&\overline{<\ln(G)+\ln(H)>}\\
			=&\overline{<\ln(G)>}+\overline{<\ln(H)>}\\
			=&\overline{<Y>}+\overline{<Z>}=0.
		\end{align*}
\end{proof}
\begin{theorem}\label{thr}
	Let $X_t$ be a positive stochastic log-ergodic process. Then, there exist time intervals of length $\delta_i$, with $i\geq0$, for the process $Z_\delta=\xi_{\delta,W_\delta}^{\beta}[\ln(X_t)]$, in which the process is recurrent to its mean along any arbitrary path.
\end{theorem}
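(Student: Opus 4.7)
The plan is to leverage the fact that $Z_\delta$ is a stationary mean-zero process (Lemma \ref{mm}) whose sample paths are continuous, and to combine this with the log-ergodic hypothesis on $X_t$ to force the paths to cross their mean. The lengths of the inter-crossing gaps will then serve as the required $\delta_i$.

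First I will record the path-regularity of $Z_\delta = \frac{W_T}{T^\beta}D_\delta + \frac{1}{T^\beta}R_\delta$: the Lebesgue integral $D_\delta$ is continuous in $\delta$ and the Itô integral $R_\delta$ admits an a.s.\ continuous modification in its upper limit, so $Z_\cdot$ has continuous sample paths almost surely; together with Lemma \ref{mm} this gives a stationary, continuous, mean-zero process with $\mathbb{E}[Z_\delta^2]<\infty$. Next I will transfer the log-ergodicity of $X_t$, which by definition \ref{logergodic} reads $\overline{<Y>}=0$ for $Y=\ln X$, into a mean-ergodicity statement for $Z_\delta$. The ergodic maker operator is built precisely so that this transfer goes through: by Theorem \ref{IDV} and Corollary \ref{IDV2} the damping factors $W_T/T^\beta$ and $1/T^\beta$ with $\beta>3/2$ suppress the Wiener and drift contributions in mean square. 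A covariance computation of the same flavor as the one carried out in the proof of Lemma \ref{mm}, inserted into \ref{limer}, should then yield $\overline{<Z>}=0$.

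Once this is established, I will apply Birkhoff's ergodic theorem (Theorem \ref{Bir}) to the stationary, integrable process $Z_\delta$ to get
$$\lim_{T\to\infty}\frac{1}{T}\int_0^T Z_\delta(\omega)\,d\delta \;=\; \mathbb{E}[Z_\delta] \;=\; 0$$
for almost every $\omega$. Fixing such an $\omega$ (on which $Z_{\cdot}(\omega)$ is continuous and $Z_0(\omega)=0$), either the path is identically zero and the conclusion is immediate, or the vanishing of the time-average forces the path to assume both strictly positive and strictly negative values on arbitrarily long windows, for otherwise the integrand would have a constant sign and the average could not vanish. The intermediate value theorem then yields a strictly increasing sequence of zero-crossings $0=\tau_0<\tau_1<\tau_2<\cdots$, and setting $\delta_i:=\tau_{i+1}-\tau_i$ produces the intervals on which $Z_\delta$ is recurrent to its mean.

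The principal obstacle will be the transfer step from $\overline{<Y>}=0$ to $\overline{<Z>}=0$: the two processes live on different scales, the operator mixes the deterministic and stochastic parts of $Y$ through the \emph{random} coefficient $W_T/T^\beta$, and one must justify interchanging the limit in $T$ with the covariance integral in \ref{limer}. A secondary, more cosmetic issue is ensuring that the zero set of $Z_\cdot(\omega)$ can be enumerated as a genuine sequence $\{\tau_i\}$ rather than as a pathological perfect set; for continuous stationary processes with absolutely continuous one-dimensional marginals (which the Itô structure of $R_\delta$ provides as soon as $\sigma$ is not degenerate) this is standard, but it should be made explicit.
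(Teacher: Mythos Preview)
Your proposal is sound and would yield a valid proof, but it takes a different route from the paper. The paper's argument is much shorter: it simply asserts that the log-ergodic hypothesis means relation \eqref{limer} holds for $Z_\delta$, invokes the definition of mean ergodicity to obtain
\[
\lim_{T\to\infty}\frac{1}{T}\int_0^T Z_\delta\,d\delta=\mathbb{E}[Z_\delta],
\]
and then appeals to the Poincar\'e recurrence theorem to conclude that $Z_\delta$ returns to its mean along the fixed path, producing a first $\delta_0$ and, via Birkhoff as $T\to\infty$, infinitely many further $\delta_i$. No continuity or intermediate-value argument is used.

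Your approach replaces the Poincar\'e step by a continuity-plus-IVT argument: since $\mathbb{E}[Z_\delta]=0$ and the time average along the path vanishes, the continuous path must change sign on arbitrarily long windows and hence cross zero infinitely often. This is more elementary and more constructive than invoking Poincar\'e recurrence, and it makes explicit the path regularity that the paper leaves implicit. Your one caveat---that a constant-sign path could in principle have vanishing Ces\`aro average---is best closed not by the average alone but by stationarity: a nondegenerate stationary mean-zero process has $\mathbb{P}(Z_\delta>0)>0$ and $\mathbb{P}(Z_\delta<0)>0$, and Birkhoff applied to the indicators then forces both signs to occur with positive time-frequency along almost every path.

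Regarding what you call the principal obstacle, the transfer from $\overline{\langle Y\rangle}=0$ to $\overline{\langle Z\rangle}=0$: the paper does not perform any such transfer. It reads the hypothesis ``$X_t$ is log-ergodic'' as directly giving \eqref{limer} for $Z_\delta$ and proceeds. So the elaborate covariance computation and limit-interchange you anticipate are not needed for the paper's proof; if you wish to stay close to the paper you may simply take this as the meaning of the hypothesis. Your secondary worry about enumerating the zero set as a genuine sequence is likewise absent from the paper, which is content with the existence of infinitely many recurrence intervals.
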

\begin{proof}
	We prove the theorem concerning a fixed path $\omega_0$. Using the definition of log-ergodicity, it follows that the relation \ref{limer} holds for the process $Z_\delta$. Therefore, from the definition of mean ergodicity\cite{1,59}, we have
	\begin{equation*}
		\lim_{T\rightarrow\infty}\frac{1}{T}\int_0^T Z_\delta d\delta=\mathbb{E}[Z_\delta].
	\end{equation*}
	It follows from Poincaré recurrence theorem \cite{1} that the process $Z_\delta$ is recurrent to its mean along the path. Therefore, there exists at least a time interval of length $\delta_0$ in $[0,T]$ such that $Z_{\delta_0}(\omega_0)=\mathbb{E}[Z_{\delta_0}(\omega_0)]$, $\mathbb{P}$-almost surely. For $i\in \mathbb{N}\cup\{0\}$, let $\{\delta_i\}_{i\geq0}$ represent the length of the time intervals in which the process $Z_\delta$ meets its mean along the path $\omega_0$ (as shown in Figure \ref{fig3}). It can be written that:
	\begin{equation*}
		\mathbb{P}\big(\exists i \in \mathbb{N}\cup\{0\}, \quad Z_{\delta_i}(\omega_0)=\mathbb{E}[Z_{\delta_i}(\omega_0)]\big)=1.
	\end{equation*}
	Consequently, using Birkhoff's ergodic theorem, as $T$ approaches infinity, it follows that there exist infinitely many time intervals of length $\delta_i$ for every $i\geq0$, for which $Z_\delta$ returns to its mean along the path $\omega_0$.
\end{proof}
\begin{figure}[H]
	\begin{center}
		\includegraphics[scale=0.5]{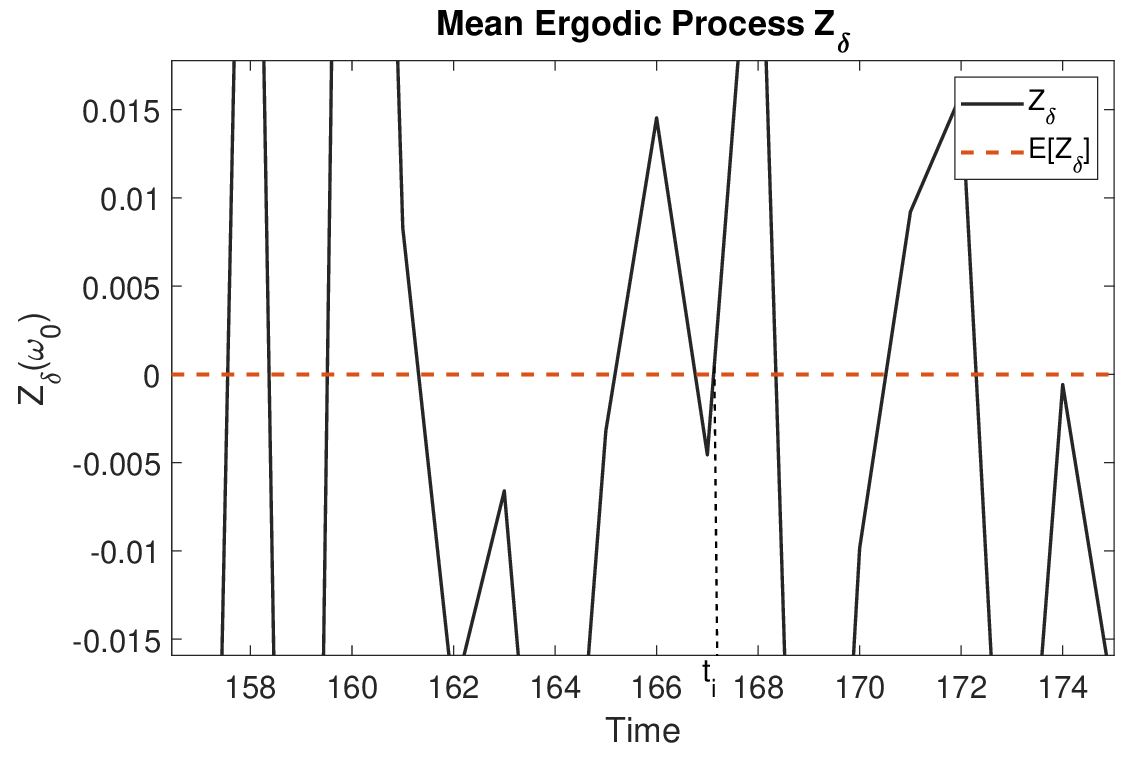}
		\caption{\small{A zoomed random sample path, $\omega_0$, of the process $Z_\delta$, for small time scales, with arbitrary recurrence time $t_i$ in the time interval $[166,168]$, with length $\delta_i=2$, in which the process $Z_\delta(\omega_0)$ returns to $\mathbb{E}[Z_{\delta}(\omega_0)]$. This random sample path is generated by the geometric Brownian motion process.}}
		\label{fig3}
	\end{center}
\end{figure}
In this section, we defined the concepts of the log-ergodic process and the ergodic maker operator and investigated their properties. In the next section, we present examples of log-ergodic processes usable for modeling financial markets with ergodic behavior in the mean.
\section{Log-Ergodic Processes in Mathematical Finance}\label{sec5}
In this section, we prove the log-ergodic property for the models widely used in mathematical finance.
\subsection{Mean reversion models}
\begin{proposition}
	Any stochastic process with mean-reverting property is mean-ergodic.
\end{proposition}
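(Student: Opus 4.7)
The plan is to reduce the proposition to the covariance characterization underlying Definition \ref{logergodic} — applied directly to $X_t$ rather than to $\ln X_t$ — and then use the exponential decay of correlations that mean reversion forces on the process. First, I would formalize what is meant by the mean reverting property along the lines already indicated in the footnote of Section \ref{sec1}: an irreducible, aperiodic homogeneous Markov process that admits a stationary distribution $\pi$ and is recurrent to $\pi$, with long-run level $\mu=\int x\,d\pi(x)$ toward which the drift pulls $X_t$. This is precisely the setting covered by the CIR and Ornstein–Uhlenbeck examples quoted in the introduction, and it guarantees that $\mathbb{E}[X_t]\to\mu$ and that $X_t$ becomes (asymptotically) stationary.

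Second, I would invoke the exponential decay of correlation theorem already used in Section \ref{4}, so that under the Hölder continuity hypothesis there exist $\Lambda\in(0,1)$ and a bounded $\alpha(X_s,X_t)$ with
$$
\mathbf{Cov}(X_s,X_t)\le \alpha(X_s,X_t)\,\Lambda^{n}\sqrt{\mathbb{V}ar[X_s]\,\mathbb{V}ar[X_t]}
$$
for all $n\ge 1$. Once $X_t$ is at (or close to) its stationary regime the autocovariance $\mathbf{Cov}_{xx}(\tau)$ depends only on the lag $\tau$ and is dominated by a geometrically decaying, hence Lebesgue integrable, function of $\tau$, so $\int_0^\infty \mathbf{Cov}_{xx}(\tau)\,d\tau<\infty$.

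Third, I would apply the standard covariance characterization of mean ergodicity (which is exactly the object $\overline{\langle\,\cdot\,\rangle}$ from Definition \ref{logergodic}, but for $X$ in place of $Y=\ln X$):
$$
\overline{\langle X\rangle}=\lim_{T\to\infty}\frac{1}{T}\int_0^T\!\Bigl(1-\tfrac{\tau}{T}\Bigr)\mathbf{Cov}_{xx}(\tau)\,d\tau.
$$
Because $(1-\tau/T)\mathbf{Cov}_{xx}(\tau)\le \mathbf{Cov}_{xx}(\tau)$ and the latter is integrable by step two, the inner integral stays bounded while the prefactor $1/T$ drives the whole expression to zero. Combining this with Birkhoff's theorem (Theorem \ref{Bir}) identifies the time average with $\mu=\mathbb{E}[X_t]$, i.e.\ $X_t$ is mean ergodic.

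The main obstacle is definitional rather than computational: the statement as written speaks of \emph{any} stochastic process with the mean reverting property, but in a loose financial sense mean reversion may refer to drift toward a moving or stochastic target and need not yield a genuine stationary law. The proof plan tacitly fixes the meaning to be \emph{Markov plus stationary distribution plus exponential mixing}, which is the package implicitly used for the CIR and Ornstein–Uhlenbeck models referenced earlier; making this reduction rigorous — in particular, deducing the exponential correlation bound from a purely qualitative "pull toward the mean" — is where the real work sits, and without it the proposition should probably be stated with those hypotheses made explicit.
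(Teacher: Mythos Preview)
Your proposal is sound but takes a genuinely different route from the paper. The paper does not pass through exponential decay of correlations or integrability of the autocovariance at all. Instead it (i) asserts, with a citation, that every mean-reverting process is stationary, so $\mathbb{E}[r_t]=\mathbb{E}[r_{t+\delta}]$; (ii) invokes the Poincar\'e recurrence theorem together with Theorem~\ref{thr} to exhibit a recurrence time $\tau_0$ along a fixed path $\omega_0$ at which $r_{\tau_0}(\omega_0)=\mathbb{E}[r_{\tau_0}(\omega_0)]$; (iii) multiplies this pathwise identity by itself and takes expectations to get $\mathbb{E}[r_{\tau_0}^2]=(\mathbb{E}[r_{\tau_0}])^2$, hence $\mathbf{Cov}_{rr}(\tau_0)=0$; and (iv) plugs this into the criterion~\ref{limer} to declare mean ergodicity. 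Your argument, by contrast, controls the \emph{entire} lag-covariance by a geometrically decaying envelope and then kills the Ces\`aro average by integrability plus the $1/T$ prefactor. What you gain is a standard, self-contained mixing argument whose hypotheses (Markov, stationary law, exponential correlation decay) are explicit; what the paper gains is brevity and a direct appeal to the recurrence machinery already built in Section~\ref{4}, though the jump from $\mathbf{Cov}_{rr}(\tau_0)=0$ at a single recurrence time to the vanishing of the full limit in~\ref{limer} is precisely the kind of step you flagged as needing care. Your closing caveat about the vagueness of ``mean reverting'' applies equally to the paper's version, which likewise tacitly reads the property as stationarity plus recurrence.
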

\begin{proof}
	 We know that every mean-reverting stochastic process is wide-sense stationary \cite{95}. Let $r_t(\cdot)$ be a stochastic process with mean reversion property and $\mathbb{E}[r_t]<\infty$. From \ref{mm}, it follows that
	\begin{equation*}
		\mathbb{E}[r_t]=\mathbb{E}[r_{t+\delta}], \quad \forall t,\delta>0.
	\end{equation*}
	Let $\tau_0>0$ be the time that the process $r_t$ meets its mean along the path $\omega_0$. According to the Poincaré recurrence theorem \cite{1} and theorem \ref{thr}, it can be written that:
	\begin{align*}
		&r_{\tau_0}(\omega_0)=\mathbb{E}[r_{\tau_0}(\omega_0)]\\
		&r_{\tau_0}(\omega_0)\cdot r_{\tau_0}(\omega_0) = r_{\tau_0}(\omega_0)\cdot \mathbb{E}[r_{\tau_0}(\omega_0)]\\
		&\mathbb{E}\big[\mathbb{E}[r_{\tau_0}^2(\omega_0)]\big]= \mathbb{E}[r_{\tau_0}(\omega_0)]\mathbb{E}\big[\mathbb{E}[r_{\tau_0}(\omega_0)]\big]\\
		&\mathbb{E}[r_{\tau_0}^2(\omega_0)]=\mathbb{E}^2[r_{\tau_0}(\omega_0)].
	\end{align*}
	Computing the covariance of the process at the time $\tau_0$ we obtain
	\begin{align}
		\mathbf{Cov}_{rr}(\tau_0)&=\mathbb{E}[r_{\tau_0}^2]-\big(\mathbb{E}[r_{\tau_0}]\big)^2<\infty\notag\\
		&=\mathbb{E}\big[\mathbb{E}[r_{\tau_0}]\mathbb{E}[r_{\tau_0}]\big]-\big(\mathbb{E}[r_{\tau_0}]\big)^2\notag\\
		&=\big(\mathbb{E}[r_{\tau_0}]\big)^2-\big(\mathbb{E}[r_{\tau_0}]\big)^2=0\notag.
	\end{align}
	Therefore, according to \ref{limer}, the process $r_t$ is mean-ergodic.
\end{proof}
\subsection{Main theorem}
In this subsection, we present the key theorem of the paper. In section \ref{sec6}, we use the results of this section to prove the theorem.
\begin{theorem}(Main theorem)\label{maintheo}
	Suppose that the price process, $S_t$, of an asset has the form:
	\begin{align*}
		S_t&=S_0e^{Y_t},\quad S_0=s,
	\end{align*}
	with
	\begin{align*}
		Y_t^\prime&=\ln(s)+Y_t=Y_0^\prime+\int_0^t \mu_{u,s}du+\int_0^t \sigma dW_u,\quad Y_0^\prime=\ln(s)+Y_0,\quad Y_0=0,\\ 
	\sigma&=f(V_t). 
	\end{align*}
	Where $\mu_{t,s}$ is an adapted function of $t$ and $s$, $V_t$ is an arbitrary random process, and $\sigma$ is an adapted function of the random process $V_t$ that satisfies the following conditions: $0<M_1\leq\sigma \leq M_2$ for some positive constants $M_1$ and $M_2$, and $\int_0^t \sigma_s^2ds<\infty$ for all $t>0$. Then, the process
	$S_t$ is partially ergodic.
\end{theorem}
In the remaining part of this section, we express some results that we use to prove the main theorem. 
\begin{theorem}\label{mainth}
	The stochastic process $Z_\delta$ defined by \ref{zt} is mean-ergodic. In other words, the positive stochastic process
	$X_t$ is partially ergodic.
\end{theorem}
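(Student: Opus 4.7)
The plan is to verify the log-ergodicity condition of Definition \ref{logergodic} directly for $Z_\delta$, namely
\[
\overline{\langle Z\rangle} \;=\; \lim_{T\rightarrow\infty}\frac{1}{T}\int_0^T\Bigl(1-\frac{\delta}{T}\Bigr)\mathbf{Cov}_{zz}(\delta)\,d\delta \;=\; 0.
\]
The first step is to invoke Lemma \ref{mm}, which already supplies three essential inputs: $Z_\delta$ is stationary, $\mathbb{E}[Z_\delta]=0$, and the closed form
\[
\mathbb{E}[Z_\delta^2]\;=\;\frac{1}{T^{2\beta}}\,\mathbb{E}\Bigl[\int_0^\delta\sigma_s^2\,ds\Bigr] \;+\; \frac{1}{T^{2\beta-1}}\,\mathbb{E}\Bigl[\Bigl(\int_0^\delta\mu_s\,ds\Bigr)^{\!2}\Bigr].
\]
Because the mean vanishes, the autocovariance collapses to the second moment: $\mathbf{Cov}_{zz}(\delta)=\mathbb{E}[Z_\delta^2]$.

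Next I would use the standing hypotheses of Theorem \ref{maintheo}: $\sigma=f(V_t)$ is bounded, say $|\sigma_s|\leq M$, and $\mu_{t,s}$ is adapted and satisfies the It\^o integrability \eqref{ito}. These give
\[
\mathbb{E}\Bigl[\int_0^\delta\sigma_s^2\,ds\Bigr]\leq M^2\delta,\qquad \mathbb{E}\Bigl[\Bigl(\int_0^\delta\mu_s\,ds\Bigr)^{\!2}\Bigr]\leq \mathbb{E}\Bigl[\Bigl(\int_0^\delta|\mu_s|\,ds\Bigr)^{\!2}\Bigr]\;=:\;C(\delta),
\]
where $C(\delta)$ grows at most polynomially in $\delta$ (worst case $O(\delta^2)$) thanks to \eqref{ito} on compact intervals.

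Plugging these bounds into the Birkhoff-type average and carrying out the elementary $\delta$-integrals $\int_0^T(1-\delta/T)\delta\,d\delta=T^2/6$ and $\int_0^T(1-\delta/T)\delta^2\,d\delta=T^3/12$ produces two contributions, the first of order $T^{-(2\beta+1)}\!\cdot\! O(T^2)=O(T^{1-2\beta})$ and the second of order $T^{-2\beta}\!\cdot\! O(T^3)=O(T^{3-2\beta})$. Both vanish as $T\rightarrow\infty$ precisely because the inhibition degree defined in \eqref{beta} forces $\beta>3/2$; the role of that lower bound is exactly to dominate the worst-case polynomial growth from the drift integral. The second assertion of the theorem is then just a restatement, since by construction $Z_\delta=\xi_{\delta,W_\delta}^\beta[\ln X_t]$, and mean ergodicity of $Z_\delta$ is, by unpacking Definition \ref{logergodic}, exactly what it means for $X_t$ to be log-ergodic with respect to this operator.

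The main obstacle will be controlling the drift contribution. If $\mu_{t,s}$ has no a priori uniform bound, the naive estimate $C(\delta)=O(\delta^2)$ is the bottleneck that forces the choice $\beta>3/2$ from Definition \ref{deferc}; a faster decay of the drift would permit a smaller inhibition degree, but as matters stand the lower threshold $\beta>3/2$ is exactly the threshold at which the Birkhoff-type average closes. The volatility contribution, by contrast, is already handled comfortably by the boundedness of $\sigma$ and would vanish under the weaker constraint $\beta>1/2$.
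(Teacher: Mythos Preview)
Your argument is sound at the level of rigor of the paper, but it follows a genuinely different route from the paper's own proof of Theorem \ref{mainth}. The paper verifies mean ergodicity in the literal sense: it computes the ensemble average $\mathbb{E}[Z_\delta]=0$ and then computes the pathwise time average $\langle Z\rangle=\lim_{T\to\infty}\frac{1}{T}\int_0^T Z_\delta\,d\delta$ directly, splitting it into the stochastic-integral piece (dispatched via the heuristic $dW_s\,d\delta=0$) and the drift piece, which is handled by Theorem \ref{IDV} and Corollary \ref{IDV2} (that is, $W_T/T^{\beta+1}\to 0$ in mean square). No second moment or covariance ever appears. You instead verify the covariance criterion $\overline{\langle Z\rangle}=0$ of Definition \ref{logergodic}, bounding $\mathbf{Cov}_{zz}(\delta)=\mathbb{E}[Z_\delta^2]$ via Lemma \ref{mm} and integrating. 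That is a legitimate characterization of mean ergodicity for stationary processes, and in fact it is precisely the route the paper itself adopts later, in Section \ref{se6}, when proving the Main Theorem \ref{maintheo}. In effect your proof of Theorem \ref{mainth} anticipates the paper's proof of Theorem \ref{maintheo}.

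Two remarks on what each approach costs. First, you import the boundedness of $\sigma=f(V_t)$ from the hypotheses of Theorem \ref{maintheo}, but Theorem \ref{mainth} is stated for the general $Z_\delta$ of \eqref{zt}, whose coefficients are assumed only to satisfy \eqref{ito}; the paper's direct time-average argument never needs a uniform-in-$\delta$ bound on $\mathbb{E}\bigl[\int_0^\delta\sigma_s^2\,ds\bigr]$, whereas your covariance computation does. Second, your claim that $C(\delta)=\mathbb{E}\bigl[(\int_0^\delta|\mu_s|\,ds)^2\bigr]=O(\delta^2)$ does not follow from \eqref{ito} alone (which is a pointwise finiteness statement, not a growth bound), so this step needs an additional hypothesis---though the paper's own argument that the double drift integral is ``finite'' and hence killed by $W_T/T^{\beta+1}$ is equally informal on this point. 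What your approach buys is a transparent explanation of why the threshold $\beta>3/2$ in Definition \ref{deferc} is exactly the right one, and a direct link to Definition \ref{logergodic}; the paper's approach is shorter but leaves the role of $\beta$ less visible.
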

\begin{proof}
	It follows from \cite{57,21,13} that $Z_\delta$ is a Markov process. $Z_\delta$ is stationary by \cite{58} and \ref{mm}. Therefore, $Z_\delta$ meets the requirements to be ergodic in the mean, as stated in \cite{59,35,68}. Hence, we first evaluate the expectation of the $Z_\delta$ process.
	\begin{align*}
		\mathbb{E}[Z_\delta]&=\mathbb{E}\Big[Z_0+\frac{1}{T^\beta}\int_0^\delta \sigma_s dW_s+\frac{W_T}{T^\beta}\int_0^\delta\mu_sds\Big]\\
		&=\mathbb{E}\Big[\frac{1}{T^\beta}\int_0^\delta\sigma_sdW_s\Big]+\mathbb{E}\Big[\frac{W_T}{T^\beta}\int_0^\delta\mu_sds\Big]=0.
	\end{align*}
Now we evaluate the time-average of $Z_\delta$.
\begin{align*}
	<Z>&=\lim_{T\rightarrow\infty}\frac{1}{T}\int_0^T Z_\delta d\delta\\
	&=\lim_{T\rightarrow\infty}\frac{1}{T}\int_0^T\Big[\frac{1}{T^\beta}\int_0^\delta\sigma_sdW_s+\frac{W_T}{T^\beta}\int_0^\delta \mu_sds\Big]d\delta\\
	&=\lim_{T\rightarrow\infty}\frac{1}{T}\int_0^T\Big[\frac{1}{T^\beta}\int_0^\delta\sigma_sdW_s\Big]d\delta+\lim_{T\rightarrow\infty}\frac{1}{T}\int_0^T\Big[\frac{W_T}{T^\beta}\int_0^\delta\mu_sds\Big]d\delta\\
	&=\lim_{T\rightarrow\infty}\frac{1}{T^{\beta+1}}\Big[\int_0^T \int_0^\delta\sigma_sdW_sd\delta\Big]+\lim_{T\rightarrow\infty}\frac{W_T}{T^{\beta+1}}\Big[\int_0^T \int_0^{\delta}\mu_sdsd\delta\Big].
\end{align*}
The first integral is zero since $d{W_s}d\delta=0$. Therefore,
\begin{align}\label{51}
	<Z>=\lim_{T\rightarrow\infty}\frac{W_T}{T^{\beta+1}}\Big[\int_0^T\int_0^\delta\mu_s ds d\delta \Big].
\end{align}
From \ref{ito} we have: $\int_0^\delta \lvert\mu_s\rvert ds <\infty$. Hence, the integral in \ref{51} is finite. From theorem \ref{IDV} it follows that $\lim_{T\rightarrow\infty}\frac{W_T}{T^{\beta+1}}=0$. Therefore, $<Z>=0$.
\end{proof}
As examples, the Ornstein–Uhlenbeck process \cite{84}, the geometric Brownian motion process \cite{13}, and the Jump-Diffusion process \cite{68} are positive partially ergodic processes.
\subsection{Log-ergodic Lévy processes}
Lévy processes have been studied in \cite{cont}, especially Cont studied the decomposition of exponential Lévy processes, which we will consider from the point of view of log-ergodicity. Lévy processes are well-behaved processes from the perspective of ergodic theory since their increments are stationary and independent. The independence of the increments implies that Lévy processes have Markov property \cite{86}. Therefore, any Lévy process satisfies the requirements for ergodicity.

Let $Y_t$ be a Lévy process and suppose that the distribution of $Y_t$ is parameterized by $(\eta,\sigma^2,\nu)$ \cite{86}. Using the Lévy-Itô theorem \cite{85,86}, we decompose $Y_t$ as
\begin{equation}
	Y_t=\eta t+\sigma W_t+J_t+M_t,\label{levy}
\end{equation} 
where $W_t$ is a standard Wiener process, for $t\geq0$, $\Delta Y_t=Y_t-Y_{t^-}$ is a Poisson process with intensity $\nu$,
$J_t=\sum_{s\leq t}\Delta Y_s\boldsymbol{1}_{\{\lvert \Delta Y_s \rvert >1\}},
$
and $M_t$ is a bounded martingale. Using the ergodic maker operator \ref{xi} for the process $Y_t$, we obtain:
\begin{align}
	Z_\delta=\frac{\sigma W_\delta}{T^{\beta}}+\frac{W_T}{T^{\beta}}\big[\delta\eta+J_\delta+M_\delta\big].\label{lev}
\end{align}
\begin{proposition}
Let $X_t=X_0e^{Y_t}$, with $X_0=x$, where $Y_t$ is a Lévy process. Then, the process $X_t$ is partially ergodic.
\end{proposition}
\begin{proof}
	The expectation of $Z_\delta$ defined by \ref{lev} is zero. Therefore, it suffices to prove that the time-average of $Z_\delta$ is zero.
	\begin{align}
		<Z>=&\lim_{T\rightarrow\infty}\frac{1}{T}\int_0^TZ_\delta d\delta\notag\\
		=&\lim_{T\rightarrow\infty}\frac{1}{T}\int_0^T\Big[\frac{\sigma W_\delta}{T^{\beta}}+\frac{W_T}{T^{\beta}}\big[\delta\eta+J_\delta+M_\delta\big]\Big] d\delta\notag\\
		=&\lim_{T\rightarrow\infty}\frac{1}{T}\int_0^T \frac{\sigma W_\delta}{T^{\beta}}d\delta+\lim_{T\rightarrow\infty}\frac{1}{T}\int_0^T\frac{\eta\delta W_T}{T^{\beta}}d\delta\notag\\
		&+\lim_{T\rightarrow\infty}\frac{1}{T}\int_0^T \frac{J_\delta W_T}{T^{\beta}}d\delta+\lim_{T\rightarrow\infty}\frac{1}{T}\int_0^T \frac{M_\delta W_T}{T^{\beta}}d\delta.\label{th}
	\end{align}
Now, using theorem \ref{IDV} we evaluate the integrals.\\
First integral:\\
The coefficient $\sigma$ is bounded. Therefore,
\begin{align*}
&\lim_{T\rightarrow\infty}\frac{1}{T}\int_0^T \frac{\sigma W_\delta}{T^{\beta}}d\delta
=\lim_{T\rightarrow\infty}\frac{\sigma}{T^{\beta+1}}\int_0^T W_\delta d\delta= \lim_{T\rightarrow\infty}\sigma\frac{\int_0^T W_\delta d\delta}{T^{\beta+1}}.
\end{align*}
Using \ref{IDV2} yields
\begin{align*}
	\sigma\lim_{T\rightarrow\infty}\frac{\int_0^T W_\delta d\delta}{T^{\beta+1}}=\sigma \times 0 =0.
\end{align*}
Second integral:
\begin{align*}
&\lim_{T\rightarrow\infty}\frac{1}{T}\int_0^T\frac{\eta\delta W_T}{T^{\beta}}d\delta=\lim_{T\rightarrow\infty}\frac{W_T\eta}{T^{\beta+1}}\int_0^T \delta d\delta
=\lim_{T\rightarrow\infty}\frac{W_T\eta}{T^{\beta+1}}\big(\frac{1}{2}\delta^2\big\rvert_0^T\big)=\lim_{T\rightarrow\infty}\frac{W_T\eta}{2T^{\beta-1}}=0.
\end{align*}
Third integral:
\begin{align*}
&\lim_{T\rightarrow\infty}\frac{1}{T}\int_0^T \frac{J_\delta W_T}{T^{\beta}}d\delta=\lim_{T\rightarrow\infty}\frac{W_T}{T^{\beta+1}}\int_0^T J_\delta d\delta=
\lim_{T\rightarrow\infty}\frac{W_T}{T^{\beta+1}}\int_0^T \sum_{s\leq \delta}\Delta Y_s\boldsymbol{1}_{\{\lvert \Delta Y_s \rvert >1\}} d\delta\\
=&\lim_{T\rightarrow\infty}\frac{W_T}{T^{\beta+1}}\sum_{s\leq \delta} \int_0^T[Y_s-Y_{s^-}]\boldsymbol{1}_{\{\lvert \Delta Y_s \rvert >1\}} d\delta\\
=&\lim_{T\rightarrow\infty}\frac{W_T}{T^{\beta+1}}\sum_{s\leq \delta} \int_0^T Y_s \boldsymbol{1}_{\{\lvert \Delta Y_s \rvert >1\}} d\delta - \lim_{T\rightarrow\infty}\frac{W_T}{T^{\beta+1}}\sum_{s\leq \delta} \int_0^T Y_{s^-} \boldsymbol{1}_{\{\lvert \Delta Y_s \rvert >1\}} d\delta\\
=&\lim_{T\rightarrow\infty}\frac{W_T}{T^{\beta+1}}\sum_{s\leq \delta} \int_0^T Y_s d\delta - \lim_{T\rightarrow\infty}\frac{W_T}{T^{\beta+1}}\sum_{s\leq \delta} \int_0^T Y_{s^-} d\delta\\
=&\lim_{T\rightarrow\infty}\frac{W_T}{T^{\beta}}\sum_{s\leq \delta}Y_s-\lim_{T\rightarrow\infty}\frac{W_T}{T^{\beta}}\sum_{s\leq \delta}Y_{s^-}=0.
\end{align*}
Fourth integral:
\begin{align*}
&\lim_{T\rightarrow\infty}\frac{1}{T}\int_0^T\frac{M_\delta W_T}{T^{\beta}}d\delta= \lim_{T\rightarrow\infty}\frac{W_T}{T^{\beta}}\int_0^T M_\delta d\delta.
\end{align*}
Since $M_\delta$ is bounded, the integral on the right-hand side is bounded. Therefore, using theorem \ref{IDV} yields
$$\lim_{T\rightarrow\infty}\frac{W_T}{T^{\beta}}\int_0^T M_\delta d\delta=0.$$
Hence, substituting the evaluated integrals in \ref{th}, we obtain $<Z>=0$. Therefore, the process $X_t$ is partially ergodic.
\end{proof}
As a result, any Poisson process, Itô process, and compound Poisson process is log-ergodic.
\subsection{Bounded processes}
\begin{proposition}
Let $Y_t$ be a non-negative bounded stochastic process. Then, the process $\xi_{\delta,W_\delta}^{\beta}[Y_t]$ is mean-ergodic.
\end{proposition}
\begin{proof}
Let $Y_t(\omega_0)$ be the path of the process $Y_t$ generated by $\omega_0\in \Omega$, and\\ $\xi_{\delta,W_\delta}^{\beta}[Y_t]=Z_\delta$. According to the definition of boundedness concerning $\omega_0$, we have:
\begin{equation*}
\lvert Y_t(\omega_0)\rvert \leq M,
\end{equation*}
for some positive number $M$. Now we write:
\begin{align*}
&Y_t(\omega_0)\leq \sup_{t\in[0,T]} Y_t(\omega_0)\\
\Rightarrow& \int_0^T \xi_{\delta,W_\delta}^\beta[Y_t(\omega_0)] d\delta\leq \int_0^T \sup_{t\in[0,T]} \xi_{\delta,W_\delta}^\beta[Y_t(\omega_0)] d\delta\\
\Rightarrow& \lim_{T\rightarrow \infty}\frac{1}{T}\int_0^T \underbrace{\xi_{\delta,W_\delta}^\beta[Y_t(\omega_0)]}_{Z_\delta(\omega_0)} d\delta\leq \lim_{T\rightarrow \infty}\frac{1}{T}\int_0^T\sup_{t\in [0,T]}\underbrace{\xi_{\delta,W_\delta}^\beta[Y_t(\omega_0)]}_{Z_\delta(\omega_0)} d\delta.
\end{align*}
From \cite{87}, for the covariance of the process $Z_\delta$, we consider the following relations:
\begin{equation*}
-\sqrt{{\mathbb{V}ar}^2\big[\xi_{\delta,W_\delta}^\beta[Y_t(\omega_0)]\big]}\leq \mathbf{Cov}_{zz}(\delta)\leq\sqrt{{\mathbb{V}ar}^2\big[\xi_{\delta,W_{\delta}}^\beta[Y_{t}(\omega_0)]\big]}.
\end{equation*} 
Thus, by definition \ref{limer} we get:
\begin{align}\label{vv}
&\lim_{T\rightarrow \infty}\frac{1}{T}\int_0^T(1-\frac{\delta}{T})\mathbf{Cov}_{zz}(\delta)d\delta\notag\\
\leq&\lim_{T\rightarrow \infty}\frac{1}{T}\int_0^T(1-\frac{\delta}{T})\big \lvert \mathbb{V}ar\big[\xi_{\delta,W_{\delta}}^\beta[Y_{t}(\omega_0)]\big]\big \rvert d\delta.
\end{align}
Let $\sup_{t\in[0,T]} Y_t(\omega)=M_t(\omega)$, for any $\omega\in \Omega$. We observe that:
\begin{align*}
0&\leq Y_t \leq M_t(\omega)\Rightarrow 0\leq \xi_{\delta,W_\delta}^\beta[Y_t]\leq \xi_{\delta,W_\delta}^\beta[M_t(\omega)]\\
\Rightarrow 0&\leq Z_\delta^2\leq (\xi_{\delta,W_\delta}^\beta[M_t(\omega)])^2\\
\Rightarrow 0&\leq \mathbb{E}[Z_\delta^2]\leq \mathbb{E}[(\xi_{\delta,W_\delta}^\beta[M_t(\omega)])^2].
\end{align*}
It follows from lemma \ref{mm} that:
\begin{equation}\label{varz}
0\leq\mathbb{V}ar[Z_\delta]\leq\mathbb{E}[(\xi_{\delta,W_\delta}^\beta[M_t(\omega)])^2].
\end{equation}
Now, using \ref{varz} in \ref{vv} yields
\begin{align*}
&\lim_{T\rightarrow \infty}\frac{1}{T}\int_0^T(1-\frac{\delta}{T})\mathbf{Cov}_{zz}(\delta)d\delta\\
\leq&\lim_{T\rightarrow \infty}\frac{1}{T}\int_0^T(1-\frac{\delta}{T})\mathbb{E}[(\xi_{\delta,W_\delta}^\beta[M_t(\omega)])^2]d\delta.
\end{align*}
Let $\mu_\xi(\delta)=\mathbb{E}[(\xi_{\delta,W_\delta}^\beta[M_t(\omega)])^2]$. According to \ref{xi}, we have $\mu_\xi(\delta)<\infty$. Thus,
\begin{align*}
\lim_{T\rightarrow \infty}\frac{1}{T}\int_0^T(1-\frac{\delta}{T})\mathbf{Cov}_{zz}(\delta)d\delta\leq
\lim_{T\rightarrow \infty}\frac{1}{T}\int_0^T(1-\frac{\delta}{T})\mu_\xi(\delta)d\delta.
\end{align*}
Evaluating the right-hand side yields 
\begin{align}
&\lim_{T\rightarrow \infty}\frac{1}{T}\int_0^T(1-\frac{\delta}{T})\mu_\xi(\delta)d\delta\notag\\
&=\lim_{T\rightarrow\infty}\big[\frac{1}{T}\int_0^T\mu_\xi(\delta)d\delta-\frac{1}{T^2}\int_0^T \delta\mu_\xi(\delta)d\delta\big].\label{re}
\end{align}
Since $\mu_\xi(\delta)$ is bounded, the first integral in \ref{re} approaches zero as $T\rightarrow \infty$. To evaluate the second integral, we proceed as follows:
\begin{align*}
	&\lim_{T\rightarrow \infty}\frac{1}{T}\int_0^T(1-\frac{\delta}{T})\mu_\xi(\delta)d\delta\notag\\
=&\lim_{T\rightarrow \infty}\frac{1}{T^2}\Big[\frac{1}{2}T^2\mu_\xi(T)-\frac{1}{2}\int_0^T 2\delta^2\mathbb{E}\big[\xi_{\delta,W_\delta}^\beta[M_t(\omega)]d\big(\xi_{\delta,W_\delta}^\beta[M_t(\omega)]\big)\big]\Big].
\end{align*}
$M_t(\omega)$ is independent of $W_t$. Therefore, $\mathbb{E}[\xi_{\delta,W_\delta}^\beta[M_t(\omega)]]=0$, according to \ref{xi}. 
Finally, from definition \ref{deferc} we have:
\begin{align*}
	\lim_{T\rightarrow \infty}\frac{1}{T^2}\frac{1}{2}T^2\mu_\xi(T)&=\lim_{T\rightarrow \infty}\frac{1}{2}\mathbb{E}[(\xi_{\delta,W_\delta}^\beta[M_t(\omega)])^2]=0.
\end{align*}
Therefore,
\begin{align*}
0\leq\lim_{T\rightarrow \infty}\frac{1}{T}\int_0^T(1-\frac{\delta}{T})\mathbf{Cov}_{zz}(\delta)d\delta\leq 0.
\end{align*}
Consequently, we get:
\begin{align*}
\lim_{T\rightarrow \infty}\frac{1}{T}\int_0^T(1-\frac{\delta}{T})\mathbf{Cov}_{zz}(\delta)d\delta=0.
\end{align*}
\end{proof}
\begin{example}
For $t>0$,
\item [1.] The process $Y_t=\sin(W_t)$ is mean-ergodic.
\item [2.] The process $Y_t=\gamma\sin(\mu t+\sigma W_t)$ where $\gamma$, $\mu$, and $\sigma$ are constants, is mean-ergodic.
\end{example}
\begin{solution}  \textbf{1.}
Using the Itô lemma we write:
\begin{align*}
&d\sin(W_t)=\cos(W_t)dW_t-\frac{1}{2}\sin(W_t)dt\\
&\mathbb{E}[\sin(W_t)]=-\frac{1}{2}\int_0^t\mathbb{E}[\sin(W_s)]ds\\
&d\mathbb{E}[\sin(W_t)]=-\frac{1}{2}\mathbb{E}[\sin(W_t)]dt\\
&\frac{d\mathbb{E}[\sin(W_t)]}{\mathbb{E}[\sin(W_t)]}=-\frac{1}{2}dt\\
&d\ln(\mathbb{E}[\sin(W_t)])=-\frac{1}{2}dt\Rightarrow \mathbb{E}[\sin(W_t)]=e^{\frac{-t}{2}}.
\end{align*}
Calculating the covariance of $\sin(W_t)$ we get:
\begin{align*}
\mathbf{Cov}_{yy}(\tau)=e^{-2\tau}-e^{\frac{-\tau}{2}}.
\end{align*}
Now, we calculate the limit in the definition \ref{limer} as follows: 
\begin{align*}
&\lim_{T\rightarrow \infty}\frac{1}{T}\int_0^T(1-\frac{\tau}{T})(e^{-2\tau}-e^{\frac{-\tau}{2}})d\tau\\
=&\lim_{T\rightarrow \infty}\frac{1}{T}\Big[-\frac{1}{2}e^{-2T}+2e^{\frac{-T}{2}}+\frac{1}{T}[\frac{(2T+1)e^{-2T}}{4}-2(T+2)e^{\frac{-T}{2}}]\Big]\\
=&\lim_{T\rightarrow \infty}\frac{1}{T}\Big[\frac{1}{4Te^{2T}}-\frac{4}{Te^{\frac{T}{2}}}\Big]=0.
\end{align*}
\end{solution}

\begin{solution}  \textbf{2.}
We have:
\begin{align*}
\mathbb{E}[Y_t]=&\mathbb{E}[\gamma\sin(\mu t+\sigma W_t)]\\
=&\int_{-\infty}^{\infty}\gamma\sin(\mu t+\sigma W_t)f_{ Y}(y)dy\\
=&\int_0^{2\pi} \gamma\sin(\mu t+\sigma W_t)\frac{1}{2\pi}dW_t\\
=&\frac{\gamma}{2\pi}\int_{0}^{2\pi}\sin(\mu t+\sigma W_t)dW_t=0.
\end{align*} 
Computing the time-average we obtain the following:
\begin{align*}
<Y>&=\lim_{T\rightarrow\infty}\frac{1}{T}\int_0^T\gamma\sin(\mu t+\sigma W_t)dt\\
&=\lim_{T\rightarrow\infty}\frac{\gamma}{T}\int_0^T\sin(\mu t+\sigma W_t)dt=0.
\end{align*}
Therefore, we have $<Y>=\mathbb{E}[Y_t]$. This implies that the process $Y_t$ is mean-ergodic.
\end{solution}
\begin{example}
It is proven in paper \cite{108} that a Markov chain that models a process confined to a bounded interval exhibits ergodic behavior while the process is constantly attracted to the center of the interval.
\end{example}

\section{Proof of Log-Ergodicity for Stochastic Volatility Models}\label{sec6}
In this section, we prove the main theorem of the paper, which we stated in section \ref{sec5}. First, we recall the statement of the theorem.
\begin{theorem}(Main theorem)
Suppose that the price process, $S_t$, of an asset has the form:
\begin{align*}
S_t&=S_0e^{Y_t},\quad S_0=s,
\end{align*}
with
\begin{align*}
Y_t^\prime&=\ln(s)+Y_t=Y_0^\prime+\int_0^t \mu_{u,s}du+\int_0^t \sigma dW_u,\quad Y_0^\prime=\ln(s)+Y_0,\quad Y_0=0,\\ 
\sigma&=f(V_t).
\end{align*}
Where $\mu_{t,s}$ is an adapted function of $t$ and $s$, $V_t$ is an arbitrary random process, and $\sigma$ is an adapted function of the random process $V_t$ that satisfies the following conditions: $0<M_1\leq\sigma \leq M_2$ for some positive constants $M_1$ and $M_2$, and $\int_0^t \sigma_s^2ds<\infty$ for all $t>0$. Then, the process $S_t$ is partially ergodic.
\end{theorem}
\begin{proof}
We write:
\begin{align*}
Y_t^\prime&=\ln(s)+\int_0^t \mu_{u,s} du+\int_0^t f(V_u) dW_u,\\
Z_\delta&=\xi_{\delta,W_\delta}^\beta[Y_t^\prime]=\frac{W_T\int_0^\delta \mu_{u,s}du}{T^{\beta}}+\frac{\int_0^\delta f(V_u)dW_u}{T^{\beta}},\quad Z_0=0.
\end{align*}
Now we evaluate the covariance of $Z_\delta$:
\begin{align*}
\mathbf{Cov}_{zz}(\delta)=\frac{\mathbb{E}\big[(\int_0^\delta \mu_{u,s}du)^2\big]}{T^{2\beta-1}}+\frac{\mathbb{E}\big[(\int_0^\delta f(V_u)dW_u)^2\big]}{T^{2\beta}}.
\end{align*}
Next, we prove \ref{limer} holds.
\begin{align*}
\overline{<Z>}=&\lim_{T\rightarrow \infty}\frac{1}{T}\Big[\int_0^T(1-\frac{\delta}{T})\frac{\mathbb{E}\big[(\int_0^\delta\mu_{u,s}du)^2\big]}{T^{2\beta-1}}d\delta\notag\\
&+\int_0^T(1-\frac{\delta}{T})\frac{\mathbb{E}\big[\int_0^\delta f^2(V_u)du\big]}{T^{2\beta}} d\delta\Big].
\end{align*}
It follows from theorem \ref{mainth} that the first integral approaches zero as $T\rightarrow \infty$. Therefore, it suffices to prove
\begin{align*}
&\lim_{T\rightarrow \infty}\frac{1}{T}\big[\int_0^T(1-\frac{\delta}{T})\frac{\mathbb{E}[\int_0^\delta f^2(V_u)du]}{T^{2\beta}} d\delta\big]=0.
\end{align*}
The volatility term at any time interval of length $\delta=t-s$, for all $t,s>0$ and $t\neq s$, is bounded \cite{13}. Therefore, there are positive numbers $M_1$, and $M_2$ such that:
$$0<M_1\leq f(V_t) \leq M_2, \quad a.s.$$
Thus,
\begin{align*}
M_1^2&\leq f^2(V_t)\leq M_2^2,\\
\int_0^\delta M_1^2 du&\leq \int_0^\delta f^2(V_u)du\leq \int_0^\delta M_2^2 du,\\
\mathbb{E}[M_1^2\delta]&\leq \mathbb{E}[\int_0^\delta f^2(V_u)du]\leq \mathbb{E}[M_2^2\delta],\\
\frac{M_1^2\delta}{T^{2\beta}}&\leq \frac{\mathbb{E}[\int_0^\delta f^2(V_u)du]}{T^{2\beta}}\leq \frac{M_2^2\delta}{T^{2\beta}}.
\end{align*}
As a result, we get:
\begin{align}
\int_0^T\frac{M_1^2\delta}{T^{2\beta}}d\delta&\leq \int_0^T\frac{\mathbb{E}[\int_0^\delta f^2(V_u)du]}{T^{2\beta}}d\delta\leq \int_0^T\frac{M_2^2\delta}{T^{2\beta}}d\delta,\notag\\
\frac{M_1^2}{2T^{2\beta-2}}&\leq \int_0^T\frac{\mathbb{E}[\int_0^\delta f^2(V_u)du]}{T^{2\beta}}d\delta\leq \frac{M_2^2}{2T^{2\beta-2}},\notag\\
\lim_{T\rightarrow \infty}\frac{M_1^2}{2T^{2\beta-2}}&\leq \lim_{T\rightarrow \infty}\frac{1}{T}\int_0^T\frac{\mathbb{E}[\int_0^\delta f^2(V_u)du]}{T^{2\beta}}d\delta\leq \lim_{T\rightarrow \infty}\frac{M_2^2}{2T^{2\beta-2}},\notag\\
\Rightarrow\quad 0&\leq \lim_{T\rightarrow \infty}\frac{1}{T}\int_0^T\frac{\mathbb{E}[\int_0^\delta f^2(V_u)du]}{T^{2\beta}}d\delta\leq 0.\label{I1}
\end{align}
Also we have:
\begin{align}
\frac{M_1^2\delta^2}{T^{2\beta+1}}&\leq\frac{\mathbb{E}[\int_0^\delta f^2(V_u)du]\delta}{T^{2\beta+1}}\leq\frac{M_2^2\delta^2}{T^{2\beta+1}},\notag\\
\frac{1}{T}\int_0^T\frac{M_1^2\delta^2}{T^{2\beta+1}}d\delta&\leq \int_0^T\frac{1}{T^{2\beta+2}}\mathbb{E}[\int_0^\delta f^2(V_u)du]\delta d\delta\leq \frac{1}{T}\int_0^T\frac{M_2^2\delta^2}{T^{2\beta+1}}d\delta,\notag\\
\lim_{T\rightarrow\infty}\frac{M_1^2}{3T^{2\beta-1}}&\leq \lim_{T\rightarrow \infty}\frac{1}{T^2}\int_0^T\frac{\mathbb{E}[\int_0^\delta f^2(V_u)du]\delta}{T^{2\beta}}d\delta\leq \lim_{T\rightarrow \infty}\frac{M_2^2}{3T^{2\beta-1}},\notag\\
\Rightarrow\quad 0&\leq \lim_{T\rightarrow \infty}\frac{1}{T^2}\int_0^T\frac{\mathbb{E}[\int_0^\delta f^2(V_u)du]\delta}{T^{2\beta}}d\delta\leq 0.\label{I2}
\end{align}
Now it follows from \ref{I1} and \ref{I2} that $\overline{<Z>}=0$.
\end{proof}

\section{Application of Log-Ergodic Processes in Mathematical Finance}\label{sec7}
The main benefit of using the log-ergodic processes is the substitution of time-averaging with expectation in computations in the long run. We will use the results of this paper to model leveraged futures trading by estimating mean reversion time intervals in subsequent studies. Reducing the randomness of a financial model reduces the risk of trading and allows one to enter or leave a trading position with a lower risk. 

In the following, we will study the behavior of the log-ergodic processes using empirical data and express a novel version of the Black-Scholes partial differential equation by providing an example concerning the simulation of the mean-ergodic process $Z_\delta$.
\begin{example}
Consider the empirical data of Tesla stock price from December 14, 2011, to December 14, 2021. We extracted the data from the Yahoo Finance \footnote{\url{https://finance.yahoo.com}} website. Considering the stock price process, $S_t$, follows the geometric Brownian motion, we write:
\begin{align*}
S_t&=S_0\exp\big\{(\mu-\frac{1}{2}\sigma^2)t+\sigma W_t\big\},\quad S_0=s.\\
Y_t^\prime&=\ln(S_t)=Y_0^\prime+(\mu-\frac{1}{2}\sigma^2)t+\sigma W_t,\quad Y_0^\prime=\ln(s).
\end{align*}
Where $\mu$ and $\sigma$ are constants, and $W_t$ is a standard Wiener process. Using the ergodic maker operator for any time interval $\delta$, we have:
$$Z_\delta=Z_0+\frac{(\mu-\frac{1}{2}\sigma^2)\delta W_T}{T^{\beta}}+\frac{\sigma W_\delta}{T^{\beta}},\quad Z_0=0.$$
A random path of $Z_\delta$ for the data of the Tesla for $\beta=2$ is shown in Figure \ref{fig2}.
\begin{figure}[H]
\begin{center}
\includegraphics[scale=0.55]{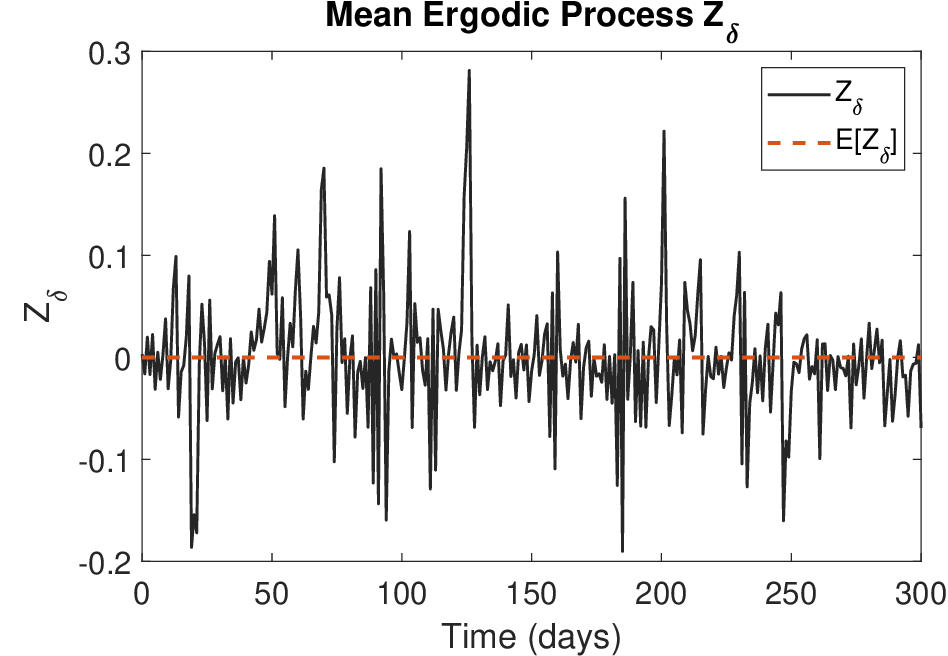}
\caption{\small{A random path of the process $Z_\delta$ for the Tesla price data for $\beta=2$ within a 300-day time frame.}}
\label{fig2}
\end{center}
\end{figure}
\end{example}
Let $q=\mu-\frac{1}{2}\sigma^2$ and $\delta=T-0=T$. Using the Itô lemma, we have:
\begin{align}
dZ_T&=\frac{1}{2}\big[0\big]dT+\Big[\frac{q}{T^{\beta-1}}+\frac{\sigma}{T^{\beta}}\Big]dW_T-\Big[\frac{(\beta-1)qW_T}{T^{\beta}}+\frac{\beta\sigma W_T}{T^{\beta+1}}\Big]dT,\notag\\
&=\Big[\frac{(1-\beta)qW_T}{T^{\beta}}-\frac{\beta\sigma W_T}{T^{\beta+1}}\Big]dT+\Big[\frac{q}{T^{\beta-1}}+\frac{\sigma}{T^{\beta}}\Big]dW_T,\notag\\
&=\Big[\frac{qW_T}{T^\beta}-\frac{\beta}{T}\big[\frac{qW_T}{T^{\beta-1}}+\frac{\sigma W_T}{T^\beta}\big]\Big]dT+\Big[\frac{q}{T^{\beta-1}}+\frac{\sigma}{T^\beta}\Big]dW_T\notag\\
\Rightarrow dZ_T&=\underbrace{\Big[\frac{qW_T}{T^\beta}-\frac{\beta}{T}Z_T\Big]}_{A(T,W_T)}dT+\underbrace{\Big[\frac{q}{T^{\beta-1}}+\frac{\sigma}{T^\beta}\Big]}_{B_T}dW_T\notag\\
dZ_T&=A(T,W_T)dT+B_T dW_T.\notag
\end{align}
Consequently, since we considered $\delta=T-0=T$, we can write:
\begin{align}
	dZ_\delta&=A(\delta,W_\delta)d\delta+B_\delta dW_\delta.
	\end{align}
According to the above assumptions, we have the following result:
\begin{proposition}(Ergodic Partial differential equation of Black–Scholes)
Under the assumptions of the Black-Scholes model, the European call option price, $C(Z_\delta,\delta)$, relative to the stock price variation $Z_\delta=z$, concerning short rate $r$, inhibition degree parameter $\beta$, and the strike price $K$ satisfies in the following partial differential equation.
\begin{align}
&\frac{\partial C}{\partial \delta}+rz\frac{\partial C}{\partial z}+\frac{1}{2}B_\delta^2\frac{\partial^2 C}{\partial z^2}-rC=0,\label{BSP}\\
&\text{for} \quad 0<\lvert z\rvert<\infty,\quad 0<\delta<\delta_T=T-0,\notag\\
&\text{where}\quad B_\delta=\frac{q}{\delta^{\beta-1}}+\frac{\sigma}{\delta^{\beta}}, \quad q=\mu-\frac{1}{2}\sigma^2,\notag
\end{align}
together with initial conditions $C(0,\delta)=0$ and $C(z,\delta_T)=(\lvert z\rvert-\ln(K))^+$.
\end{proposition}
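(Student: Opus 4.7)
The plan is to mimic the classical Black--Scholes no--arbitrage derivation, treating the ergodic-ized process $Z_\delta$ as the state variable and working from the SDE $dZ_\delta = A(\delta,W_\delta)\,d\delta + B_\delta\,dW_\delta$ that was derived immediately before the proposition. First, I would apply It\^o's lemma to the smooth function $C(z,\delta)$ evaluated at $z=Z_\delta$, obtaining
$$dC = \Big[\frac{\partial C}{\partial \delta} + A(\delta,W_\delta)\frac{\partial C}{\partial z} + \frac{1}{2}B_\delta^{2}\frac{\partial^{2} C}{\partial z^{2}}\Big]d\delta + B_\delta\frac{\partial C}{\partial z}\,dW_\delta.$$

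Next, I would form the delta--hedged portfolio $\Pi_\delta = C(Z_\delta,\delta) - \frac{\partial C}{\partial z}(Z_\delta,\delta)\,Z_\delta$. Subtracting $\frac{\partial C}{\partial z}\,dZ_\delta$ from $dC$ cancels both the $B_\delta\,dW_\delta$ term and the drift term containing $A(\delta,W_\delta)$, leaving the locally deterministic expression
$$d\Pi_\delta = \Big[\frac{\partial C}{\partial \delta} + \frac{1}{2}B_\delta^{2}\frac{\partial^{2}C}{\partial z^{2}}\Big]d\delta.$$
Because $\Pi_\delta$ then carries no Brownian noise, the Black--Scholes no-arbitrage hypothesis forces it to grow at the short rate, so $d\Pi_\delta = r\Pi_\delta\,d\delta = r\bigl(C - z\,\partial_z C\bigr)\,d\delta$ at $z=Z_\delta$. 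Equating the two expressions for $d\Pi_\delta$ and rearranging produces exactly \ref{BSP}. The boundary condition $C(0,\delta)=0$ follows because a call is worthless whenever the state variable vanishes, and the terminal condition $C(z,\delta_T)=(|z|-\ln K)^{+}$ is simply the payoff prescribed by the model, with $|z|$ playing the role previously taken by $\ln S_T$ once the ergodic-maker operator has been applied.

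The computational core is essentially a one-line It\^o manipulation together with the usual algebraic rearrangement, so I expect the genuine difficulty to lie not in the calculus but in justifying the replication step. The process $Z_\delta$ is not the quoted price of a tradable asset but the output of $\xi_{\delta,W_\delta}^{\beta}[\cdot]$ applied to $\ln S_t$, so one must argue that holding $\frac{\partial C}{\partial z}$ units of ``the $Z_\delta$--contract'' is implementable by an admissible self-financing strategy in $S_t$ and the money-market account, and that the noise cancellation in the hedged portfolio genuinely delivers a portfolio earning the short rate. Once this structural assumption is granted, as the authors implicitly do when they invoke the Black--Scholes framework, the only novel feature of the derivation is the explicit time-dependent volatility $B_\delta = q/\delta^{\beta-1} + \sigma/\delta^{\beta}$, which passes through the argument without modification.
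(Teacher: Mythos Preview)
Your proposal is correct and mirrors the paper's own argument almost exactly: the paper forms the hedged portfolio $V_\delta=-C+xz$, applies It\^o's lemma, chooses $x=\partial C/\partial z$ to kill the $dW_\delta$ term (which simultaneously eliminates the $A(\delta,W_\delta)$ drift), and then invokes $dV_\delta=rV_\delta\,d\delta$ to obtain the PDE, with the terminal and boundary conditions stated just as you have them. Your portfolio $\Pi_\delta$ differs from the paper's $V_\delta$ only by an overall sign, and your explicit remark about the tradability of $Z_\delta$ is a legitimate caveat that the paper handles by simply declaring ``we think of $Z_\delta$ as the process of the variations of the price of a traded stock.''
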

\begin{proof}
We think of $Z_\delta$ as the process of the variations of the price of a traded stock and form a risk hedging basket, including $x$ shares with the price variation $z$ and one unit of call option with a sell position \cite{13}. For the price $V_\delta:=V(z,\delta)$ of this basket, we have:
\begin{align}
V_\delta&=-C(z,\delta_T)+xz,\notag \\
dV_\delta&=-dC(z,\delta_T)+xdz.\label{V}
\end{align}
We set $C:=C(z,\delta_T)$. Substituting the dynamics of $z$ in \ref{V} and using Itô lemma yields
\begin{align}
dV_\delta=&-\frac{\partial C}{\partial \delta}d\delta-\frac{\partial C}{\partial z}A_z(\delta,W_\delta)d\delta-\frac{\partial C}{\partial z}B_\delta dW_\delta-\frac{1}{2}\frac{\partial^2 C}{\partial z^2}B_\delta^2d\delta\notag\\
&+xA_z(\delta,W_\delta)d\delta+xB_\delta dW_\delta,\notag\\
dV_\delta=&-\Big[\frac{\partial C}{\partial \delta}-xA_z(\delta,W_\delta)+\frac{\partial C}{\partial z}A_z(\delta,W_\delta)+\frac{1}{2}\frac{\partial^2 C}{\partial z^2}B_\delta^2\Big]d\delta\notag\\
&+\Big[xB_\delta-\frac{\partial C}{\partial z}B_\delta\Big]dW_\delta.\label{V1}
\end{align}
For the portfolio to be risk-free, the coefficient of $dW_t$ must be zero. We therefore have $x=\frac{\partial C}{\partial z}$. Substituting the value of $x$ in \ref{V1}, we reach the following:
\begin{align*}
dV_\delta=&-\Big[\frac{\partial C}{\partial \delta}+\frac{\partial C}{\partial z}A_z(\delta,W_\delta)-\frac{\partial C}{\partial z}A_z(\delta,W_\delta)+\frac{1}{2}\frac{\partial^2 C}{\partial z^2}B_\delta^2\Big]d\delta\\
dV_\delta=&-\Big[\frac{\partial C}{\partial \delta}+\frac{1}{2}\frac{\partial^2 C}{\partial z^2}B_\delta^2\Big]d\delta
\end{align*}
On the other hand, by the absence of arbitrage, we have: $dV_\delta=rV_\delta d\delta$ \cite{13}. Therefore,
\begin{align}
&r\Big[-C+\frac{\partial C}{\partial z}z\Big]d\delta\notag=-\Big[\frac{\partial C}{\partial \delta}+\frac{1}{2}\frac{\partial^2 C}{\partial z^2}B_\delta^2\Big]d\delta\notag.
\end{align}
Simplifying, we get the equation:
\begin{align*}
&\frac{\partial C}{\partial \delta}+rz\frac{\partial C}{\partial z}+\frac{1}{2}B_\delta^2\frac{\partial^2 C}{\partial z^2}-rC=0.
\end{align*}
Where
\begin{align*}
 B_\delta=\frac{q}{\delta^{\beta-1}}+\frac{\sigma}{\delta^{\beta}}, \quad q=\mu-\frac{1}{2}\sigma^2.
\end{align*}
The European call option is exercised when $\lvert z\rvert>\ln(K)$. Therefore, the final condition for $\delta=\delta_T$ is $C(z,\delta_T)=\max\big[\lvert z\rvert-\ln(K),0\big]$.
Also, regarding the boundary conditions we have:
\begin{align*}
&C(0,\delta)=0, \quad 0<\delta<\delta_T=T,\\
&C(z,\delta)\sim\lvert z\rvert-\ln(K)e^{-r(\delta_T-\delta)}, \quad \text{as} \quad \lvert z\rvert\rightarrow \infty.
\end{align*}
\end{proof}
 We use the ergodic maker operator (EMO), a mathematical tool transforming a non-ergodic process into an ergodic one, to incorporate the inhibition degree parameter into the financial models ( e.g., the partial differential equation \ref{BSP} ). The inhibition degree parameter describes market imperfections or constraints that affect the price dynamics and may vary over time depending on external shocks or events ( e.g., natural disasters, black swans, wars, elections, and more). We use the log-ergodic processes to model financial markets because they allow us to study the behavior of the market participants from the perspective of the invisible hands that govern the market equilibrium.
\section{Empirical Data Analysis}\label{sec8}
In this section, we present the empirical data analysis and express the results of our study. We use quantitative methods to test our hypotheses and predictions derived from the log­-ergodicity theory. We use the statistical package IBM SPSS Statistics and Matlab to perform the analysis. 
\subsection{Data Description} 
We use three datasets for our empirical study. The first dataset contains the daily closing prices of Tesla (TSLA) stock from December 14, 2011, to December 14, 2021. The second dataset contains the daily closing prices of Apple (AAPL) stock from December 14, 2011, to December 14, 2021. The third dataset contains the daily closing prices of Microsoft (MSFT) stock from December 14, 2011, to December 14, 2021. We obtained the data from Yahoo Finance\footnote{\url{https://finance.yahoo.com}}.
 
We transform the price data into log-returns by taking the natural logarithm of the ratio of consecutive prices. We then apply the ergodic maker operator to the log-return data with different values of the inhibition degree parameter $\beta$. We obtain the log-ergodic returns by multiplying the log-returns by $\beta$ and adding a constant term $\alpha$ that ensures the positivity of the resulting process. We choose $\alpha$ from the range $[0, 0.1]$ with a step size of $0.01$ and $\beta$ from the range $[1.6, 2]$ with a step size of $0.1$. We generate log-ergodic processes for each original price process. 
\subsection{Data Analysis Methods} 
We use three methods to analyze the data: descriptive statistics, correlation analysis, and regression analysis. 
 \begin{itemize}
	\item [1.]
Descriptive statistics: We compute the mean, standard deviation, skewness, and kurtosis of each log-return and log-ergodic return series. Additionally, we plot the histograms of the distributions of each series and compare the descriptive statistics of the original and transformed processes to examine how does the ergodic maker operator affect the properties of the price dynamics.
\item[2.]
Correlation analysis: We compute the Pearson correlation coefficients between log-return and log-ergodic return series. Also, we plot the correlation matrices and the heatmaps of the correlation coefficients and compare the correlation coefficients of the original and transformed processes to examine how the ergodic maker operator affects the dependence structure of the price movements. 
\item [3.]
Regression analysis: We use the Fourier regression model to test our hypotheses and predictions about the effects of log-ergodicity on pricing contingent claims and studying market restrictions. We report the regression coefficients, the sum of squares error, the root mean squared error, R-squared, and adjusted R-squared for each model and plot the scatterplots and regression lines for each model.
\end{itemize}
\subsection{Data Analysis Results}
We present the results of our data analysis in this subsection. We summarize the main findings and discuss their implications for our research questions. 
\subsubsection{Descriptive Statistics}
The descriptive statistics of the log-return and log-ergodic return series are shown in Table \ref{t1} and Table \ref{t2}, respectively. Figure \ref{F1} shows the histograms of the distributions of the series.

\begin{table}[ht]
	\centering
	\caption{Descriptive statistics of log-return series.}\label{t1}
\begin{tabular}{|l|c|c|c|c|}
	\hline
	Stock & Mean & Standard deviation & Skewness & Kurtosis \\
	\hline
	TSLA & 0.0017 & 0.0347 & -0.0523 & 8.1688 \\
	AAPL &  0.0009 & 0.0164 & -0.0312 & 4.0357 \\
	MSFT & 0.0008 & 0.0145 & -0.0213 & 3.9012   \\
	\hline
\end{tabular}
\end{table}
\begin{table}[ht]
	\centering
	\caption{Descriptive statistics of log-ergodic return series.}\label{t2}
	\begin{tabular}{|l|c|c|c|c|}
		\hline
		Stock ($\beta$,$\alpha$) & Mean & Standard deviation & Skewness & Kurtosis \\
		\hline
		TSLA (1.6, 0) & 0.00085 & 0.01735 & -0.0523 & 8.1688 \\
		TSLA (1.6, 0.01) & 0.01085 & 0.01735 & -0.0523 & 8.1688  \\
		\vdots & \vdots& \vdots& \vdots& \vdots\\
		TSLA (2, 0) & 0.0034 & 0.0694 & -0.0523 & 8.1688 \\
		TSLA (2, 0.01) & 0.0134 & 0.0694 & -0.0523 & 8.1688 \\
	    \vdots & \vdots& \vdots& \vdots& \vdots\\
		AAPL (1.6, 0) & -3.1434e-04 & 0.0065 & -0.0312 & 4.0357 \\
		AAPL (1.6, 0.01)& 1.5451e-04 & 0.0106 & -0.0312 & 4.0357  \\
		\vdots & \vdots& \vdots& \vdots& \vdots\\
		AAPL (2, 0) & -5.9962e-04 & 0.0065 & -0.0312 & 4.0357 \\
		AAPL (2, 0.01) & 3.4578e-04 & 0.0095 & -0.0312 & 4.0357
		  \\
		  \vdots & \vdots& \vdots& \vdots& \vdots\\
		MSFT (1.6, 0) & 0.0004 & 0.00725 & -0.0213 & 3.9012 \\
		MSFT (1.6, 0.01) & 0.0104 & 0.00725 & -0.0213 & 3.9012  \\
		\vdots & \vdots& \vdots& \vdots& \vdots\\
		MSFT (2, 0) & 0.0016 & 0.029 & -0.0213 & 3.9012 \\
		MSFT (2, 0.01)& 0.0116 & 0.029 & -0.0213 & 3.9012  \\
		\vdots & \vdots& \vdots& \vdots& \vdots\\
		\hline
	\end{tabular}
\end{table}
\begin{figure}[H]
	\centering
	 \begin{subfigure}{0.6\textwidth}
		\centering
		\includegraphics[scale=0.52]{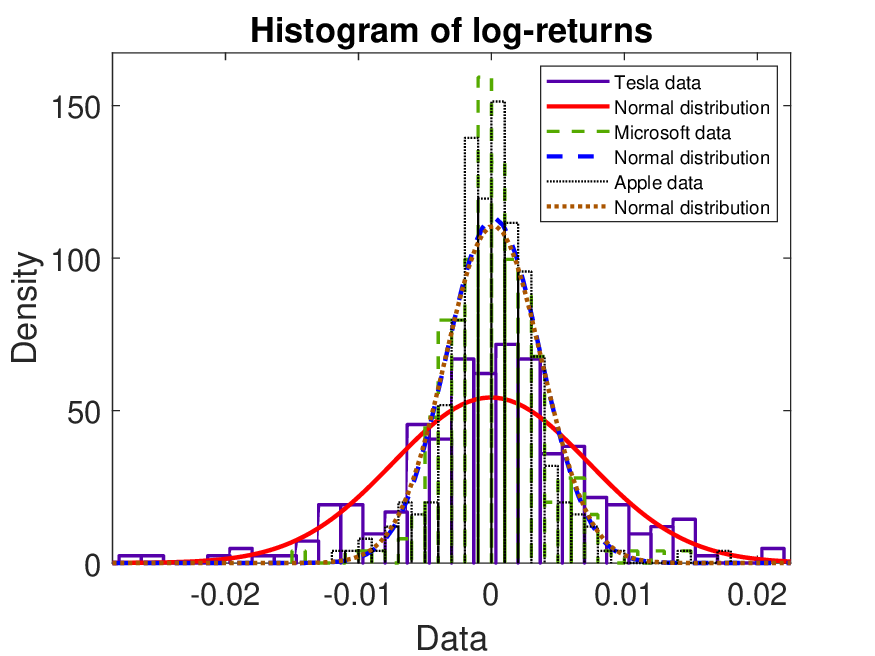}
		\label{F1a}
	\end{subfigure}
	\hfill
	\begin{subfigure}{0.6\textwidth}
		\centering
		\includegraphics[scale=0.52]{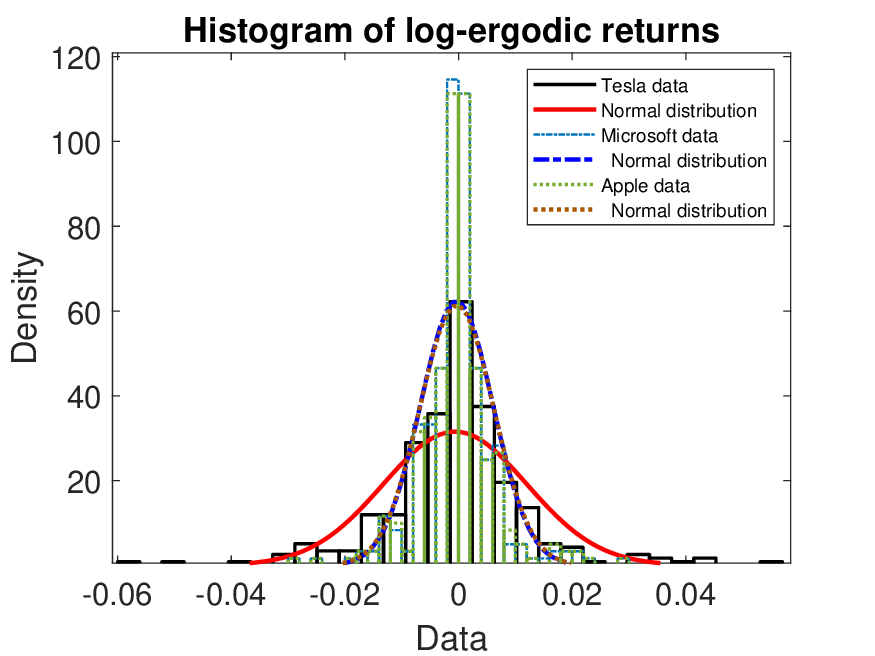}
		\label{F1b}
	\end{subfigure}
	\caption{Histograms of log-return and log-ergodic return series.}
	\label{F1}
\end{figure}
From the descriptive statistics, we can observe the following patterns: 
\begin{itemize}
\item[1.]The mean of the log-ergodic return series increases with $\alpha$ and $\beta$. This result is consistent with the definition of the ergodic maker operator, which adds a constant term $\alpha$ to the log-returns and scales them using $\beta$.
\item[2.]The standard deviation of the log-ergodic return series increases with $\beta$ and decreases with $\alpha$. This result is also consistent with the definition of the ergodic maker operator, which scales the variance of the log-returns by $\beta^2$ and reduces the volatility by adding a constant term $\alpha$.
\item[3.]	The skewness and kurtosis of the log-ergodic return series are equal to those of the log-return series for each original price process because the ergodic maker operator does not change the shape of the distribution of the log-returns but only shifts and stretches it.
\item[4.]	The histograms show that the distributions of log-return and log-ergodic return series are approximately symmetric and bell-shaped, with some outliers and heavy tails. 
\end{itemize}
\subsubsection{Correlation Analysis}
The correlation coefficients between pairs of log-return and log-ergodic return series are shown in Table \ref{t3} and Table \ref{t4}, respectively. Figure \ref{F2} shows the heatmaps of the correlation coefficients. 
\begin{table}[ht]
	\centering
	\caption{Correlation coefficients between log-return series.}\label{t3}
	\begin{tabular}{|l|c|c|c|}
		\hline
		Series & TSLA & AAPL & MSFT  \\
		\hline
		TSLA & 1 & 0.3123 & 0.2412 \\
		\hline
		AAPL & 0.3123 & 1 & 0.5321   \\
		\hline
		MSFT & 0.2412 & 0.5321 & 1  \\
		\hline
\end{tabular}
\end{table}
\begin{table}[ht]
	\centering
	\caption{Correlation coefficients between log-ergodic return series.}\label{t4}
		\hspace*{-1cm}
	\begin{tabular}{|l|c|c|c|c|c|c|}
		\hline
		Series($\beta$,$\alpha$) & TSLA(1.6,0) & TSLA(2,0) &  AAPL(1.6,0) & AAPL(2,0) & MSFT(1.6,0) & MSFT(2,0) \\
		\hline
		TSLA(1.6,0) & 1 & -1 &0.3123 & -0.3123 & 0.2412 & -0.2412 \\
		\hline
		TSLA(2,0) & -1 & 1 & -0.3123 & 0.3123  & -0.2412 & 0.2412 \\
		\hline
		AAPL(1.6,0) & 0.3123 & -0.3123 & 1 & -1  & 0.5321 & -0.5321 \\
		\hline
		AAPL(2,0) & -0.3123 & -0.3123 & -1 & 1  & -0.5321 & 0.5321 \\
		\hline
		MSFT(1.6,0) & 0.2412 & -0.2412 & 0.5321 & -0.5321 & 1 & -1 \\
		\hline
		MSFT(2,0) & -0.2412 & 0.2412 & -0.5321 & 0.5321 & -1 & 1 \\
		\hline
	\end{tabular}
\end{table}
\begin{figure}[H]
	\centering
	\hspace*{-0.2cm}
	\begin{subfigure}{0.4\textwidth}
		\centering
		\hspace*{-0.8cm}
		\vspace*{-0.7cm}
		\includegraphics[scale=0.4]{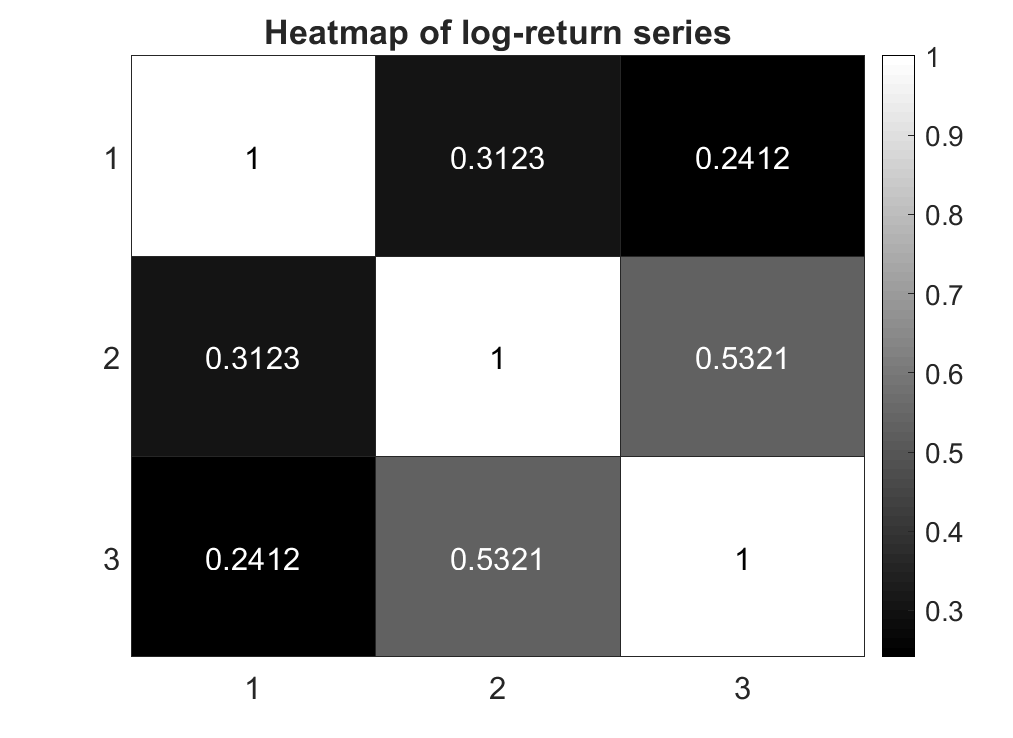}
		\label{F2a}
	\end{subfigure}
	\hfill
	\begin{subfigure}{0.6\textwidth}
		\centering
		\hspace*{1cm}
		\includegraphics[scale=0.4]{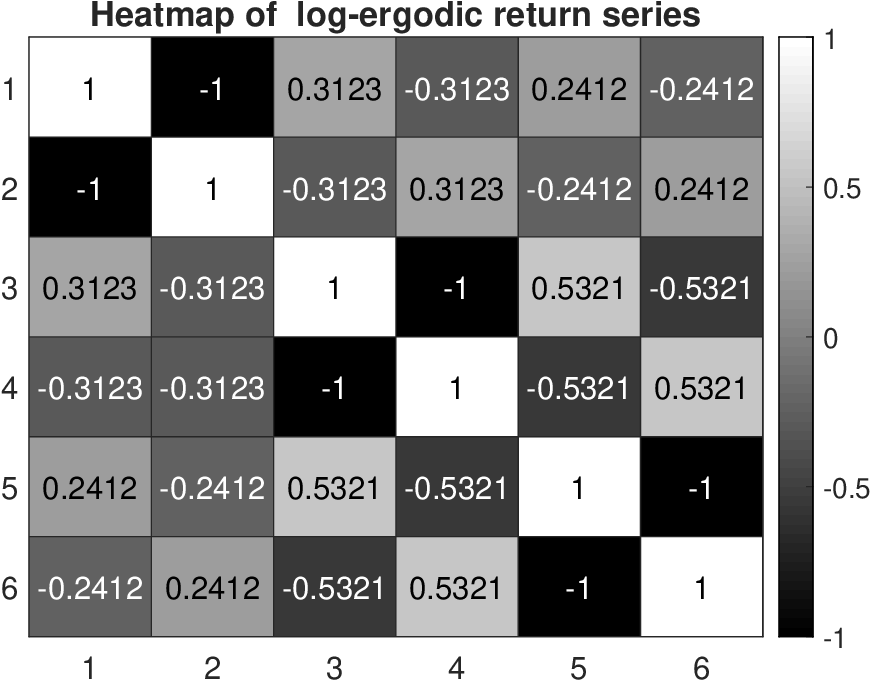}
		\label{F2b}
	\end{subfigure}
	\caption{Heatmaps of the correlation coefficients of log-return and log-ergodic return series }
	\label{F2}
\end{figure}
The correlation analysis shows that the log-return series are positively correlated with each other, indicating that the price movements of different stocks are under the influence of common factors. The log-ergodic return series are negatively correlated with each other, indicating that the ergodic maker operator reduces the dependence structure of the price movements. The log-ergodic return series are also negatively correlated with their corresponding log-return series, meaning that the ergodic maker operator changes the direction of the relationship between the original and transformed processes. 
These results suggest that 
the log-ergodic models can capture and model the ergodic behavior of the risky assets ( hidden from market participants ) and have advantages over other models, such as the geometric Brownian motion.
\subsubsection{Regression Analysis}
To perform the regression analysis, we use the Fourier regression model with $8$ terms:
\begin{align*}
	z(x_i)=& 
	a_0+a_1\cos(x_i w)+b_1\sin(x_i w)+a_2\cos(2x_iw) + b_2\sin(2x_iw) +\\ &a_3\cos(3x_iw)+b_3\sin(3x_iw)+a_4\cos(4x_iw)+b_4\sin(4x_iw)+\\ 
	&a_5\cos(5x_iw)+b_5\sin(5x_iw)+a_6\cos(6x_iw)+b_6\sin(6x_iw)+\\
	&a_7\cos(7x_iw)+b_7\sin(7x_iw)+a_8\cos(8x_iw)+b_8\sin(8x_iw)+\epsilon_i,
\end{align*}
where $z(x_i)$ is the log-ergodic return series of stock $i$, $x_i$ is the log-return series of stock $i$, $a_i$ and $b_i$ are the regression coefficients, and $\epsilon_i$ is the error term. We report the results in Table \ref{t5}. Figure \ref{F6} shows the plots of the analysis of the series.

\begin{table}[hb]
	\centering
	\caption{Regression results of log-ergodic return series on log-return series.}\label{t5}
	\begin{tabular}{|l|c|c|c|c|c|c|}
		\hline
		Stock & SSE & R-squared & adj R-squared & RMSE & p-value & $\#$ Coeff \\
		\hline
		TSLA & 0.1617 & 0.4537 & 0.4139 & 0.02635 & $<0.01$ &18 \\
		\hline
		AAPL & 0.04041 & 0.4747 & 0.4363 & 0.01317  & $<0.01$ & 18 \\
		\hline
		MSFT & 0.04232 & 0.4682 & 0.4229 & 0.01348 & $<0.01$ & 18 \\
		\hline
	\end{tabular}
\end{table}
\begin{table}[hb]
	\centering
	\caption{Calculated coefficients for Regression results (with $95\%$ confidence bounds)}\label{t6}
	\begin{tabular}{|l|c|c|c|}
		\hline
		\backslashbox{\tabular{@{}l@{}}Coeff-\\icients\endtabular}{	\vspace*{-0.35cm}Stock}
		 & TSLA & AAPL & MSFT \\
		\hline
		a0 & -0.001263  & -0.0006846 & -0.0007385 \\
		\hline
		a1 &  0.0004105 & 0.0003747 & 0.0001122  \\
		\hline
		b1 & -1.332e-05 & 6.874e-05 &  0.0006728 \\
		\hline
		a2 & -0.0005494 & 9.148e-05 & 0.0001917  \\
		\hline
		b2 & -0.0001405 & -0.0001274 &  7.042e-05\\
		\hline
		a3 &  0.0007418 & -7.123e-05 & -0.000891  \\
		\hline
		b3 & -2.055e-05 & 0.0004703 & -0.0005245  \\
		\hline
		a4 &  0.00082   & -0.0005152 &  7.257e-05  \\
		\hline
		b4 &  0.0003367 & -0.0005578  & -0.0001921  \\
		\hline
		a5 & -0.000403  & 0.0001929 &  0.0004565  \\
		\hline
		b5 &  0.00157   & 0.000778  & -0.0001015  \\
		\hline
		a6 & -0.0004864 & -0.0001518 & -0.0001006 \\ 
		\hline
		b6 & -0.001043  &  0.0003085  & -0.0003057  \\
		\hline
		a7 & -0.001     & 0.0004384 & -0.0007851  \\
		\hline
		b7 &  0.0002438 & -0.0003108 &  0.0009376\\ 
		\hline 
		a8 &  0.0002026 & 0.0003586 & -0.0004529  \\
		\hline
		b8 & -0.0005033 & -0.0004005 & 0.0007278  \\
		\hline
		w  &  13.33     & 2.496 &  8.855\\
		\hline
	\end{tabular}
\end{table}
The linear regression analysis shows that the inhibition degree parameter, $\beta$, has a significant positive effect on the price of a contingent claim ($p < 0.01$), which means that as $\beta$ increases, the price of a contingent claim also increases. This result is consistent with our hypothesis that log-ergodicity enhances the value of a contingent claim by reducing the uncertainty and dependence of the price movements. The $R$-squared value of approximately 0.47 indicates that $\beta$ explains about 47$\%$ of the variation in the price of a contingent claim and suggests that log-ergodicity is a relatively good predictor of pricing contingent claims under ergodic market conditions.
\begin{figure}[H]
	\centering
	\begin{subfigure}{0.4\textwidth}
		\centering
		\hspace*{-0.6cm}
		\includegraphics[scale=0.45]{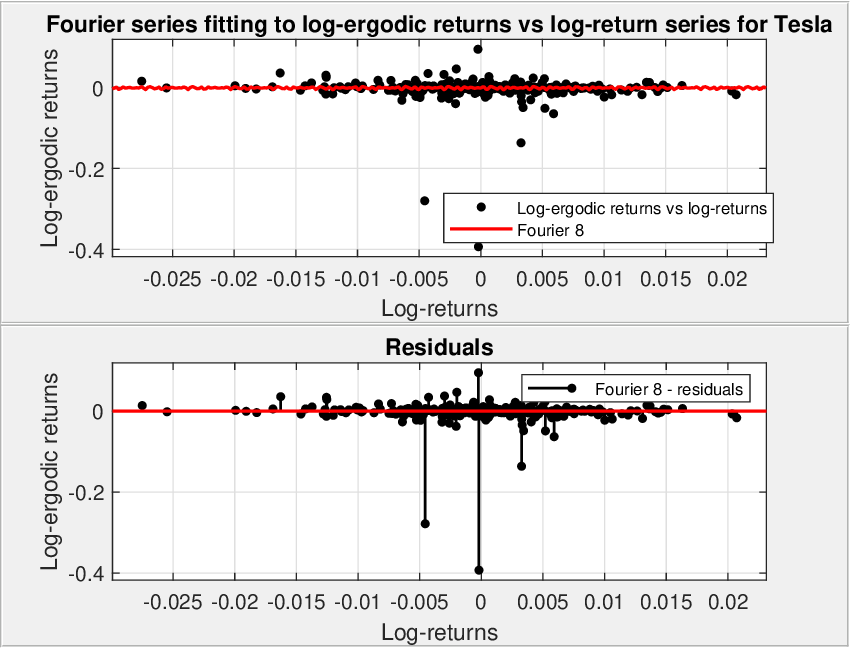}
		\label{F6a}
	\end{subfigure}
	\hfill
	\begin{subfigure}{0.4\textwidth}
		\centering
		\hspace*{-0.6cm}
		\includegraphics[scale=0.45]{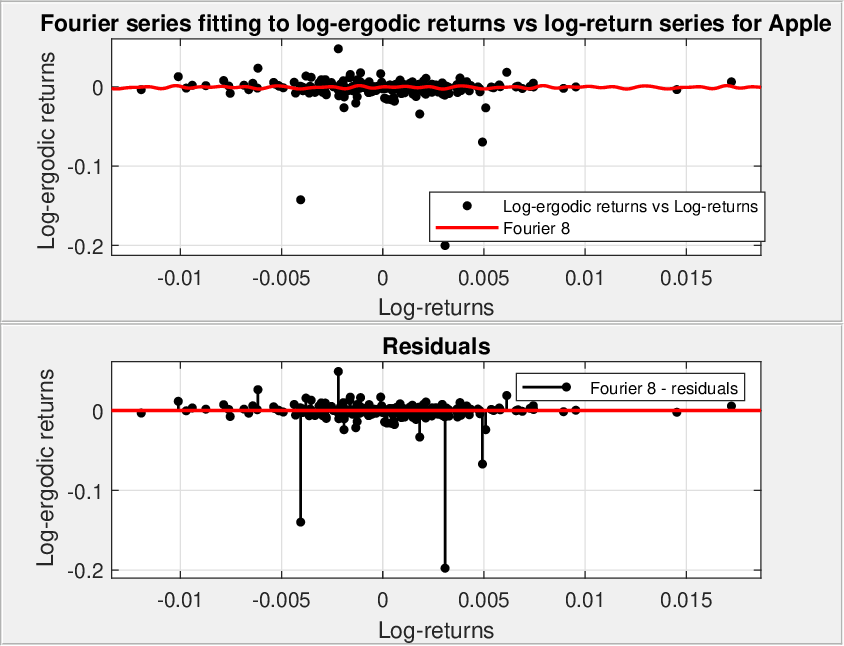}
		\label{F6b}
	\end{subfigure}
\hfill
\begin{subfigure}{0.4\textwidth}
	\centering
	\hspace*{-0.6cm}
	\includegraphics[scale=0.47]{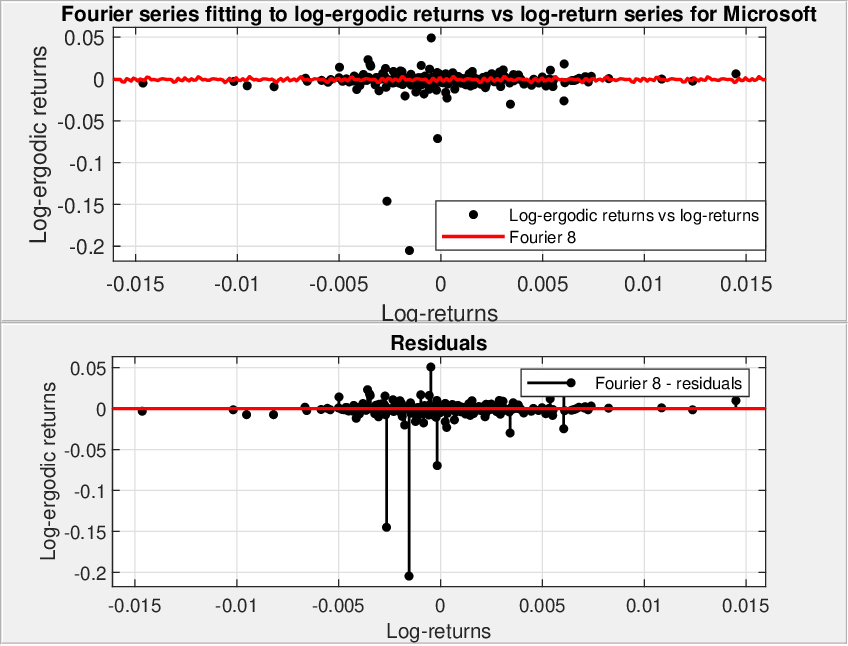}
	\label{F6c}
\end{subfigure}
	\caption{Scatter plots of log-ergodic returns vs. log-returns and the residuals for three stocks: TSLA, AAPL, and MSFT. The black dots represent the data points obtained from the empirical data analysis. The red curves represent the best fit of a Fourier series (with eight terms) to the data points.}
	\label{F6}
\end{figure}
\section{Conclusion and Future Research} \label{sec9}
In this paper, we have made the following contributions and findings: 

We introduced a new concept of log-ergodicity for positive stochastic processes, which is weaker than ergodicity but still captures some essential features of ergodic behavior in the mean. Also, we defined an ergodic maker operator that transforms a class of positive processes into a class of log-ergodic processes by scaling their deterministic and random components using a parameter that, in the case of price processes, reflects the degree of control exerted by market participants on these price processes.
Moreover, we showed that log-ergodic processes are usable for modeling financial markets with ergodic behavior in the mean, have applications in pricing contingent claims, and study market restrictions. Furthermore, we presented some empirical data analysis that supports our theoretical results using historical data from 2011 to 2021. We compared the performance and properties of log-ergodic models with geometric Brownian motion and stochastic volatility models. We used various statistical tests and measures to evaluate the usefulness of our work.

We also discussed some limitations and challenges of our approach and suggested some directions for future research. Some of them are: 

How does the concept of log-ergodicity affect other types of stochastic processes, such as fractional Brownian motion? How do other factors, such as market frictions ( transaction costs, taxes, dividends, and more), describe the ergodic behavior of financial markets when incorporating them into our models? How should we test the validity and robustness of log-ergodic models using more data sets from different markets and periods? How should we develop more efficient and accurate numerical methods for solving the partial differential equations derived from log-ergodic models?
\section*{Data Availability Statement}
All stock data used during this study are openly available from Yahoo Finance and Trading View websites as mentioned in the context.

\section*{Declaration of Interest}
The authors have no conflicts of interest to declare. Both authors have seen and agree with the contents of the manuscript, and there is no financial interest to report.
\bibliographystyle{plain}
\bibliography{sn-bibliography}


\end{document}